\title{Topological complexity of monotone symplectic manifolds}
\author{Ryuma Orita} 
\address{Department of Mathematics, Faculty of Science, Niigata University, Niigata 950-2181, Japan}
\email{\href{mailto:orita@math.sc.niigata-u.ac.jp}{orita@math.sc.niigata-u.ac.jp}}
\urladdr{\url{https://ryuma-orita.netlify.app/}}
\subjclass[2020]{Primary 55M30; Secondary 53D05}
\keywords{Topological complexity, Lusternik--Schnirelmann category, symplectic manifolds, Kodaira dimension}
\thanks{This work was supported by JSPS KAKENHI Grant Number 21K13787.}
\newtheorem{theorem}{Theorem}[section]
\newtheorem{proposition}[theorem]{Proposition}
\newtheorem{corollary}[theorem]{Corollary}
\theoremstyle{definition}
\newtheorem{definition}[theorem]{Definition}
\theoremstyle{remark}
\newtheorem{remark}[theorem]{Remark}
\newcommand{\ZZ}{\mathbb{Z}}
\newcommand{\RR}{\mathbb{R}}
\newcommand{\CC}{\mathbb{C}}
\newcommand{\Image}{\operatorname{Im}}
\newcommand{\PD}{\mathrm{PD}}
\newcommand{\pr}{\mathrm{pr}}
\newcommand{\cat}{\mathsf{cat}}
\newcommand{\TC}{\mathsf{TC}}
\newcommand{\wgt}{\mathrm{wgt}}
\newcommand{\cwgt}{\mathrm{cwgt}}
\begin{document}

\begin{abstract}
We study Farber's topological complexity for monotone symplectic manifolds.
More precisely, we estimate the topological complexity of $4$-dimensional spherically monotone manifolds whose Kodaira dimension is not $-\infty$.
\end{abstract}

\maketitle

\tableofcontents


\section{Introduction and main results}\label{sec:introduction}

\subsection{Topological complexity}

In his celebrated papers \cite{Fa03,Fa04}, Farber introduced a homotopy invariant $\TC(X)$ of a topological space $X$,
which detects a topological complexity of algorithms for autonomous robot motion planning in the configuration space $X$ of a mechanical system.
To be more precise, Farber defined the \textit{topological complexity} $\TC(X)$ of $X$
to be the smallest integer $k$ (or infinity) such that the product $X\times X$ can be covered by $k$ open subsets $U_1,\ldots,U_k$ on each of which the free path fibration
\[
	p\colon X^I\to X\times X;\quad \gamma\mapsto (\gamma(0),\gamma(1))
\]
admits a continuous section $U_i\to X^I$, where $I=[0,1]$.
For example, Farber \cite[Theorem 9]{Fa03} computed the topological complexity of a closed orientable surface $\Sigma_g$ of genus $g$ as follows:
\[
    \TC(\Sigma_g)=\begin{cases}%
        3 & \text{if $g=0,1$}, \\
        5 & \text{if $g\geq 2$}.
    \end{cases}
\]
It is easy to see that a path-connected space $X$ satisfies $\TC(X)=1$ if and only if $X$ is contractible.
We refer the reader to \cite{Fa06} and \cite[Chapter 4]{Fa08} for more examples and an excellent exposition.
We note that in several references, a \textit{normalized} version is used, which is one less than our $\TC$.

A general upper bound for $\TC$ is achieved by the connectivity of the space $X$.
Namely, if $X$ is an $r$-connected (i.e., $\pi_i(X)$ vanishes for all $i\leq r$) CW-complex, then
\begin{equation}\label{eq:upper_bound}
    \TC(X)\leq\frac{2\dim{X}}{r+1}+1,
\end{equation}
see \cite[Theorem 5.2]{Fa03} for the proof.
For example, if $X$ is (path-)connected, then $\TC(X)\leq 2\dim{X}+1$.
Moreover if $X$ is simply connected, then $\TC(X)\leq \dim{X}+1$.

A \textit{symplectic manifold} is a pair $(M,\omega)$ consisting of a $2n$-dimensional smooth manifold $M$ equipped with a non-degenerate closed 2-form $\omega$.
The non-degeneracy of $\omega$ means that its $n$-th exterior power $\omega^n$ is nowhere vanishing,
and hence its cohomology class $[\omega]^n\in H^{2n}(M;\RR)$ is non-zero.
The existence of such a cohomology class helps us to estimate $\TC$ from below, see Section \ref{sec:TC-weight} for details.

\begin{theorem}[{\cite[Corollary 3.2]{FTY03}}]\label{thm:simply_conn}
    Let $(M,\omega)$ be a simply connected closed symplectic manifold.
    Then $\TC(M)=\dim{M}+1$.
\end{theorem}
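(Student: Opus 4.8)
The plan is to establish the two inequalities $\TC(M)\le\dim M+1$ and $\TC(M)\ge\dim M+1$ separately; write $\dim M=2n$.

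The upper bound is immediate. A closed manifold has the homotopy type of a finite CW-complex, and since $M$ is simply connected it is $1$-connected, so the connectivity bound \eqref{eq:upper_bound} with $r=1$ gives $\TC(M)\le\frac{2\dim M}{2}+1=\dim M+1$; this is precisely the special case already recorded just after \eqref{eq:upper_bound}.

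For the lower bound I would invoke the cohomological (zero-divisor cup-length) estimate recalled in Section~\ref{sec:TC-weight}: over a field $R$, if $a\in H^*(X;R)$ and the zero-divisor $\bar a:=a\otimes 1-1\otimes a\in H^*(X\times X;R)$ satisfies $\bar a^{\,k}\ne 0$, then $\TC(X)\ge k+1$. I would apply this with $R=\QQ$ (or $\RR$) and $a=[\omega]\in H^2(M;\QQ)$. Expanding by the binomial theorem,
\[
    \bar\omega^{\,2n}=\sum_{j=0}^{2n}\binom{2n}{j}(-1)^{j}\,[\omega]^{\,2n-j}\otimes[\omega]^{\,j}.
\]
Because $\dim M=2n$, every power $[\omega]^{m}$ with $m>n$ vanishes, so only the $j=n$ term survives:
\[
    \bar\omega^{\,2n}=\binom{2n}{n}(-1)^{n}\,[\omega]^{n}\otimes[\omega]^{n}.
\]
Non-degeneracy of $\omega$ gives $[\omega]^{n}\ne 0$ in $H^{2n}(M;\QQ)$, and $\binom{2n}{n}\ne 0$ in $\QQ$, hence $\bar\omega^{\,2n}\ne 0$ in $H^{4n}(M\times M;\QQ)$. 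Therefore $\TC(M)\ge 2n+1=\dim M+1$, and combined with the upper bound this yields $\TC(M)=\dim M+1$.

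The argument is short, and the only point requiring care is the choice of coefficient field: one must work in characteristic zero so that $\binom{2n}{n}$ is invertible — over $\ZZ/p$ this coefficient may vanish and the naive choice of zero-divisor would break down. No genuine obstacle arises; the proof is simply the combination of the general connectivity upper bound with the cohomological lower bound supplied by the symplectic class.
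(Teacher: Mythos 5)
Your argument is correct and is essentially the standard proof of this result, which the paper does not reprove but cites from [FTY03]; it also mirrors the cup-length computations the paper itself carries out later (the surviving $(-1)^n\binom{2n}{n}[\eta]^n\times[\eta]^n$ summand in the proofs of its Theorems on toroidally/spherically monotone manifolds). One small correction: $[\omega]$ need not be a rational class, so you cannot take $a=[\omega]\in H^2(M;\QQ)$ in general; work over $\RR$ (which your parenthetical already allows), where the same binomial expansion and K\"unneth argument go through verbatim.
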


\textit{
    Throughout the paper, we assume that our symplectic manifolds are connected and non-simply connected unless otherwise noted.
}

The topological complexity of a connected CW-complex $X$ is called \textit{maximal} \cite{CV21} if $\TC(X)=2\dim X+1$ (compare to \eqref{eq:upper_bound}).
For example, $\TC$ of a closed orientable surface of genus $g\geq 2$ is maximal.
Recently, it is shown that $\TC$ of a closed non-orientable surface of genus $g\geq 2$ is also maximal
by Dranishnikov \cite{Dr16} for $g\geq 4$, and by Cohen and Vandembroucq \cite{CV17} for $g=2,3$.

Let $(M,\omega)$ be a closed symplectic manifold.
Grant and Mescher \cite{GM20} showed that if $M$ admits a symplectic form whose cohomology class $[\omega]$ is atoroidal (see Definition \ref{def:atoroidal_aspherical} (ii) for the definition),
then $\TC(M)$ is maximal.
For example, every closed orientable surface $\Sigma_g$ of genus $g\geq 2$ admits an atoroidal symplectic form.

\begin{theorem}[{\cite[Theorem 1.2]{GM20}}]\label{thm:atoroidal}
    Let $(M,\omega)$ be a closed symplectic manifold with atoroidal $\omega$.
    Then $\TC(M)=2\dim{M}+1$.
\end{theorem}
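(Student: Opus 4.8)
The plan is to establish the two estimates $\TC(M)\le 2\dim M+1$ and $\TC(M)\ge 2\dim M+1$ separately. The upper bound is immediate: a closed manifold is a connected CW-complex of dimension $\dim M$, so \eqref{eq:upper_bound} with $r=0$ gives $\TC(M)\le 2\dim M+1$. All the content is in the lower bound. Write $2n=\dim M$ and consider the zero-divisor
\[
    \bar\omega:=1\otimes[\omega]-[\omega]\otimes 1\in H^2(M\times M;\RR).
\]
Since $[\omega]^{n+1}=0$ by dimension and $[\omega]^n\neq 0$ by non-degeneracy, the Künneth theorem gives $\bar\omega^{2n}=(-1)^n\binom{2n}{n}\,[\omega]^n\otimes[\omega]^n\neq 0$ in $H^{4n}(M\times M;\RR)$. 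By itself this only yields $\TC(M)\ge 2n+1$ via zero-divisor cup-length (recovering the lower bound in Theorem \ref{thm:simply_conn}); to reach $4n+1$ I would instead use the TC-weight $\wgt$, together with its super-additivity under cup products and the basic estimate $\TC(M)\ge\wgt(v)+1$ for any nonzero zero-divisor $v$. Granting the key lemma below that atoroidality of $[\omega]$ forces $\wgt(\bar\omega)\ge 2$, one gets $\wgt(\bar\omega^{2n})\ge 2n\cdot\wgt(\bar\omega)\ge 4n$, hence $\TC(M)\ge\wgt(\bar\omega^{2n})+1\ge 4n+1=2\dim M+1$, which finishes the proof.

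So the heart of the matter is the lemma: if $[\omega]$ is atoroidal then $\wgt(\bar\omega)\ge 2$, i.e.\ $\bar\omega$ restricts to $0$ in $H^2(U_1\cup U_2;\RR)$ whenever $U_1,U_2\subseteq M\times M$ are open sets over each of which the free path fibration $p$ admits a continuous section. I would prove this by a Mayer--Vietoris argument. A section of $p$ over $U_i$ is precisely a homotopy from $\pr_1|_{U_i}$ to $\pr_2|_{U_i}$, so $\bar\omega|_{U_i}=0$ for $i=1,2$. Choosing (smooth) homotopies $G_i\colon U_i\times I\to M$ and integrating $G_i^*\omega$ over the $I$-fibre produces $\beta_i\in\Omega^1(U_i)$ with $d\beta_i=\pm\bar\omega|_{U_i}$ (same sign for both $i$), so $\beta_1-\beta_2$ is closed on $U_1\cap U_2$ and the Mayer--Vietoris connecting homomorphism sends $\eta:=[\,(\beta_1-\beta_2)|_{U_1\cap U_2}\,]\in H^1(U_1\cap U_2;\RR)$ to $\bar\omega|_{U_1\cup U_2}$. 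It therefore suffices to show $\eta$ vanishes on every loop $c\colon S^1\to U_1\cap U_2$. By naturality of fibre integration, $\langle\eta,c\rangle=\int_{S^1\times I}(G_1\circ(c\times\mathrm{id}))^*\omega-\int_{S^1\times I}(G_2\circ(c\times\mathrm{id}))^*\omega$; the two maps $S^1\times I\to M$ occurring here agree on $S^1\times\{0,1\}$ (both restrict to $\pr_1\circ c$ and to $\pr_2\circ c$), so gluing the two annuli along their pair of boundary circles yields a torus $T^2$ and a map $\Phi\colon T^2\to M$ with $\langle\eta,c\rangle=\pm\int_{T^2}\Phi^*\omega$. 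Atoroidality of $[\omega]$ makes this vanish; hence $\eta=0$, so $\bar\omega|_{U_1\cup U_2}=0$, proving $\wgt(\bar\omega)\ge 2$.

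The main obstacle is exactly this lemma, and inside it the bookkeeping that converts ``a section of $p$ over $U_i$'' into usable data: identifying $\bar\omega|_{U_1\cup U_2}$ with a Mayer--Vietoris connecting image, verifying the fibre-integration/Stokes identity $d\beta_i=\pm\bar\omega|_{U_i}$ and its compatibility with restriction along a loop, and checking that the two homotopies really glue to a continuous map out of $T^2$ so that the hypothesis can be invoked. One can sidestep de Rham theory and the smoothing of $G_i$ by running the same argument cohomologically, representing $\langle\eta,c\rangle$ as the pairing of $[\omega]$ with the singular $2$-cycle carried by the glued torus; the torus-gluing step, and hence the role of atoroidality, is unchanged. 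Once $\wgt(\bar\omega)\ge 2$ is in hand, the passage to $\TC(M)=2\dim M+1$ is purely formal, as above.
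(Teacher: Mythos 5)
Your proposal is correct, and its overall scheme is exactly the one this paper (following Grant--Mescher) uses: the upper bound from \eqref{eq:upper_bound}, and the lower bound by showing the zero-divisor $\bar\omega=\pr_2^*[\omega]-\pr_1^*[\omega]$ has $\TC$-weight at least $2$, taking its $2n$-th cup power, and observing via K\"unneth that the middle summand $(-1)^n\binom{2n}{n}[\omega]^n\times[\omega]^n$ survives because $[\omega]^n\neq 0$; this is precisely the template of Theorem \ref{thm:main_general_toroidally}. The difference is that the paper does not prove the key weight estimate at all: it imports it wholesale as Theorem \ref{thm:atoroidal_TC-wgt} (and the theorem itself as Theorem \ref{thm:atoroidal}) from \cite{GM20}, whereas you supply a from-scratch Mayer--Vietoris argument, converting sections of the path fibration into homotopies, fibre-integrating $\omega$ to get primitives $\beta_i$ with $d\beta_i=\pm\bar\omega|_{U_i}$, and gluing the two annuli along their common boundary circles into a torus so that atoroidality kills the connecting class. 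That argument is sound (it is essentially the mechanism behind \cite[Corollary 3.6]{GM20}). One small bookkeeping point: you prove the weight bound only in the restricted sense of open subsets $U_1,U_2\subseteq M\times M$ admitting sections, while Definition \ref{def:TC-weight} and Theorem \ref{thm:TC-weight_cup_product} concern arbitrary maps $f\colon Y\to M\times M$ with partial lifts over a cover of $Y$; to quote the paper's superadditivity verbatim you should either run your argument after pulling everything back along $f$ (it goes through unchanged, with the singular-cohomology version you already mention replacing de Rham when $Y$ is not a manifold), or note that the restricted weight also satisfies superadditivity by the standard relative cup-product argument and already yields $\TC(M)\geq\wgt(\bar\omega^{2n})+1$. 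Either repair is routine, so this is a cosmetic mismatch rather than a gap.
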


An important property of $\TC$ is its homotopy invariance \cite[Theorem 3]{Fa03}.
Therefore, the topological complexity of an aspherical space $X$ (i.e., $\pi_2(X)=0$ for all $i\geq 2$) depends only on its fundamental group $\pi=\pi_1(X)$.
To keep this in mind, the topological complexity of a group $\pi$ is defined to be that of the Eilenberg--MacLane space $K(\pi,1)$: $\TC(\pi)=\TC(K(\pi,1))$.
For example, the circle $S^1$ is of $K(\ZZ,1)$ and $\TC(\ZZ)=2$ by direct computation.
In fact, Grant, Lupton, and Oprea \cite{GLO13} showed that $\ZZ$ is the only group having $\TC(\pi)=2$.
More generally, the $n$-torus $T^n$ is of $K(\ZZ^n,1)$.
Farber \cite[Theorem 12]{Fa03} showed that $\TC(T^n)=\dim{T^n}+1$.
Thus $\TC(\ZZ^n)=\TC(T^n)$ is far from being maximal.
Roughly speaking, an abelian group might be too simple to have large $\TC$.
Cohen and Vandembroucq \cite{CV21} studied the topological complexity of abelian groups.
We refer the reader to \cite{CF10,GLO15b} for other results concerning the topological complexity of aspherical spaces.

However, Farber and Mescher \cite{FM20} showed that
the topological complexity of an aspherical space can also attain either its maximum or one less than the maximum
if one imposes a condition on the fundamental group.
Let us now consider a symplectic manifold $(M,\omega)$.
Even if it is not aspherical as a space, it sometimes admits an \textit{aspherical} symplectic form (see Definition \ref{def:atoroidal_aspherical} (i)).
This is a weaker condition than being atoroidal.

\begin{theorem}[{\cite[Theorem 10.1]{FM20}}]\label{thm:aspherical}
    Let $(M,\omega)$ be a closed symplectic manifold with aspherical $\omega$.
    Assume that the fundamental group $\pi_1(M)$ is of type FL and the centralizer of every non-trivial element is infinitely cyclic.
    Then $\TC(M)$ is either $2\dim{M}$ or $2\dim{M}+1$.
\end{theorem}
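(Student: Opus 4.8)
The plan is to establish $2\dim M\le\TC(M)\le 2\dim M+1$ by proving the two bounds separately. The upper bound is automatic: $M$ is a connected CW-complex, so \eqref{eq:upper_bound} with $r=0$ gives $\TC(M)\le 2\dim M+1$, and the whole content is the lower bound $\TC(M)\ge 2\dim M$. Write $2n=\dim M$, $\pi=\pi_1(M)$, and let $c\colon M\to K(\pi,1)$ be a classifying map. I would first recast the hypothesis on $\omega$: asphericity (Definition~\ref{def:atoroidal_aspherical}(i)) says that $[\omega]$ annihilates the image of the Hurewicz map $\pi_2(M)\to H_2(M;\RR)$, which is equivalent to $[\omega]=c^*\alpha$ for some $\alpha\in H^2(\pi;\RR)$. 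Non-degeneracy of $\omega$ gives $\langle\alpha^n,c_*[M]\rangle=\int_M\omega^n\ne 0$, so $\alpha^n\ne 0$ in $H^{2n}(\pi;\RR)$ --- hence $\mathrm{cd}(\pi)\ge 2n$ --- and $c_*[M]\ne 0$ in $H_{2n}(\pi;\QQ)$; by the K\"unneth theorem $c_*[M]\times c_*[M]\ne 0$ in $H_{4n}(\pi\times\pi;\QQ)$, i.e.\ $M$, and with it $M\times M$, is $\QQ$-\emph{essential} over its classifying space.

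The lower bound will come from cohomological estimates for $\TC$: a non-zero cup product of $k$ \emph{zero-divisors} --- classes in the cohomology of $M\times M$ with arbitrary local coefficients that restrict trivially to the diagonal --- forces $\TC(M)>k$, and more sharply $\TC(M)\ge 1+\wgt(z)$ for any non-zero zero-divisor $z$ (cf.\ Section~\ref{sec:TC-weight}). In the \emph{atoroidal} setting of Theorem~\ref{thm:atoroidal} one needs only that $\wgt(\bar\omega)=2$, where $\bar\omega=1\times[\omega]-[\omega]\times1$, so that subadditivity of $\wgt$ and $\bar\omega^{2n}\ne 0$ already force $\TC(M)=2\dim M+1$; for a merely aspherical $\omega$ this weight bound is unavailable, and the extra zero-divisors must be harvested from $\pi$ itself. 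The crux is the group-theoretic input behind the theorem: for $\pi$ of type FL with the centralizer of every non-trivial element infinite cyclic, the canonical $\TC$-class $\mathfrak{v}\in H^1(\pi\times\pi;J)$ associated with the diagonal, where $J=\ker(\ZZ[\pi\times\pi]\to\ZZ[(\pi\times\pi)/\Delta\pi])$, satisfies $\mathfrak{v}^{2\mathrm{cd}(\pi)-1}\ne 0$, whence $\TC(\pi)\ge 2\mathrm{cd}(\pi)$. Here type FL supplies a finite free $\ZZ\pi$-resolution to dualize against, and Shapiro's lemma applied to the short exact sequence $0\to J\to\ZZ[\pi\times\pi]\to\ZZ[(\pi\times\pi)/\Delta\pi]\to 0$ reduces the non-vanishing of the powers $\mathfrak{v}^{k}$ to statements about the cohomology of $\pi$ alone; the infinite-cyclic-centralizer hypothesis is precisely what rules out the contributions of the centralizers of elements of $\pi$ that would otherwise annihilate the relevant group in degree $2\mathrm{cd}(\pi)-1$, in the spirit of the known computations of $\TC$ of torsion-free hyperbolic groups. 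I expect this non-vanishing, and its persistence under pullback to $M\times M$, to be where the real work lies.

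Finally one transports the bound to $M$ along $c\times c\colon M\times M\to K(\pi,1)\times K(\pi,1)$. Since $\mathrm{cd}(\pi)\ge 2n$, the class $\mathfrak{v}^{4n-1}$ is non-zero on $K(\pi,1)\times K(\pi,1)$, and its pullback is a product of $4n-1$ zero-divisors on $M\times M$; it remains to check that this pullback is non-zero. Because $M\times M$ is a closed oriented $4n$-manifold, a class of degree $4n-1$ on it is detected by cap product with $[M\times M]$, and by the projection formula this reduces to producing $\psi\in H^1(\pi\times\pi;-)$ with $\langle(c\times c)^*(\mathfrak{v}^{4n-1}\cup\psi),[M\times M]\rangle=\langle\mathfrak{v}^{4n-1}\cup\psi,c_*[M]\times c_*[M]\rangle\ne 0$; the essentialness recorded in the first paragraph, together with the non-degeneracy of $\mathfrak{v}^{4n-1}$ obtained in the group-theoretic step, supplies such a $\psi$. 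Thus $\TC(M)>4n-1$, i.e.\ $\TC(M)\ge 2\dim M$, and combined with the first paragraph $\TC(M)\in\{2\dim M,2\dim M+1\}$; one moreover deduces $\mathrm{cd}(\pi)=\dim M$, as a strictly larger value would violate the upper bound.
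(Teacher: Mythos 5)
Your overall skeleton (trivial upper bound from \eqref{eq:upper_bound}, lower bound from a non-zero product of zero-divisors of large $\TC$-weight) is the right one, but the lower-bound argument you propose is not the one that works here, and its two load-bearing claims are left unproved. You route everything through the canonical class $\mathfrak{v}\in H^1(\pi\times\pi;J)$ and assert (a) that $\mathfrak{v}^{2\,\mathrm{cd}(\pi)-1}\neq 0$ under the type-FL and centralizer hypotheses, and (b) that the pullback of $\mathfrak{v}^{4n-1}$ to $M\times M$ is detected by a Poincar\'e-duality pairing against $c_*[M]\times c_*[M]$ via some unspecified $\psi$. Neither is established: for (a) you explicitly defer ``the real work,'' and for (b) the ``non-degeneracy of $\mathfrak{v}^{4n-1}$'' you invoke was never obtained --- even granting $\mathfrak{v}^{4n-1}\neq 0$ on $\pi\times\pi$, non-vanishing of its evaluation against the particular class $c_*[M]\times c_*[M]$ (equivalently, non-vanishing of the pullback to the non-aspherical space $M\times M$) does not follow, since $K(\pi,1)\times K(\pi,1)$ need not satisfy any duality and $\mathrm{cd}(\pi)$ may a priori exceed $2n$. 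Your closing remark that $\mathrm{cd}(\pi)=\dim M$ ``as a strictly larger value would violate the upper bound'' is also incorrect: a large $\mathrm{cd}(\pi)$ inflates the lower bound for $\TC(K(\pi,1))$, not for $\TC(M)$, so nothing is violated.

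The mechanism actually used (this theorem is [FM20, Theorem 10.1]; the paper reproves a generalization as Theorem \ref{thm:main_general_spherically}, which at $\lambda=0$ is this statement) avoids both difficulties by taking the zero-divisor to be built from $\omega$ itself rather than from $\mathfrak{v}$. Since $\pi_1(M)$ is of type FL there is a \emph{finite} complex $K=K(\pi_1(M),1)$, and asphericity of $[\omega]$ gives $\Omega\in H^2(K;\RR)$ with $f^*\Omega=[\omega]$ (Proposition \ref{prop:aspherical_iff}). The class $(\pr_2^*\Omega-\pr_1^*\Omega)^{2n}$ is a degree-$4n$ zero-divisor on the aspherical complex $K\times K$, so the centralizer hypothesis enters exactly once, through Theorem \ref{thm:aspherical_TC-wgt} ([FM20, Theorem 3]), to give it $\TC$-weight $\geq 4n-1$; this weight is transferred to $(\pr_2^*[\omega]-\pr_1^*[\omega])^{2n}$ on $M\times M$ by naturality under $f\times f$ (the explicit lifting argument in the proof of Theorem \ref{thm:main_general_spherically}). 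The non-vanishing is then immediate: the K\"unneth summand $(-1)^n\binom{2n}{n}[\omega]^n\times[\omega]^n$ is non-zero because $\omega$ is symplectic. Hence $\TC(M)\geq 1+(4n-1)=2\dim M$ with no appeal to Poincar\'e duality on $M\times M$ and no non-degeneracy statement for powers of $\mathfrak{v}$. Note also that the weight bound you would need cannot be recovered from ``$\mathfrak{v}$ is a zero-divisor, hence has weight $\geq 1$'' plus subadditivity alone; the whole point of the centralizer hypothesis is to upgrade the weight of a \emph{single} top-degree zero-divisor from $1$ to (degree $-\,1$), which is a different mechanism from the one your sketch relies on.
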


A group $\pi$ is of \textit{type FL} if $\ZZ$ admits a finite free resolution over the group ring $\ZZ[\pi]$, see \cite[Section VIII.6]{Br82}.
Computing $\TC$ for all aspherical spaces $K(\pi,1)$ has not been done yet in general.
Grant \cite{Gr12} gave upper bounds for $\TC(\pi)$ for nilpotent groups $\pi$.
Grant, Lupton, and Oprea \cite{GLO15a} studied lower bounds.
Moreover, Farber, Grant, Lupton, and Oprea \cite{FGLO19} studied upper and lower bounds in terms of Bredon cohomology.

In this paper, we endeavor to extend the conditions beyond aspherical.
As an even weaker condition than being aspherical, there is the condition of being spherically monotone (see Definition \ref{def:monotone} (iii)).
Let us now state our main result.

\begin{theorem}\label{thm:main}
    Let $(M,\omega)$ be a closed 4-dimensional symplectic manifold whose Kodaira dimension is not $-\infty$.
    \begin{enumerate}
        \item If $(M,\omega)$ is toroidally monotone, then $\TC(M)=9$.
        \item If $(M,\omega)$ is spherically monotone and the fundamental group $\pi_1(M)$ is of type FL and every non-trivial element of the center is infinitely cyclic, then $\TC(M)$ is either $8$ or $9$.
    \end{enumerate}
\end{theorem}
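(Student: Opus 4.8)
The plan is to reduce both parts to the lower-bound arguments underlying Theorems~\ref{thm:atoroidal} and \ref{thm:aspherical}, but applied to an auxiliary cohomology class rather than to $[\omega]$ itself. Since $M$ is connected of dimension $4$, the estimate \eqref{eq:upper_bound} already gives $\TC(M)\le 2\dim M+1=9$ with no further work, so only lower bounds must be produced. Let $\lambda>0$ be the monotonicity constant and set
\[
    \eta:=[\omega]-\lambda\,c_1(M)\in H^2(M;\RR).
\]
By the definition of (spherical, resp.\ toroidal) monotonicity, $\eta$ vanishes on every spherical (resp.\ toroidal) homology class; that is, $\eta$ is aspherical (resp.\ atoroidal). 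The point is that $\eta$ retains the only feature of a symplectic cohomology class that the Grant--Mescher and Farber--Mescher lower bounds actually exploit --- a non-vanishing top power --- while acquiring the vanishing property their arguments require.

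The first step is to show $\eta^2\neq 0$ in $H^4(M;\RR)$. When $M$ is minimal this is immediate from
\[
    \eta^2=[\omega]^2-2\lambda\,(c_1(M)\cdot[\omega])+\lambda^2\,c_1(M)^2,
\]
because for a minimal symplectic $4$-manifold the assumption that the Kodaira dimension is not $-\infty$ amounts, by T.-J.~Li's description of the symplectic Kodaira dimension (which rests on Taubes' equality $SW=Gr$ for $b^+>1$ and on Li--Liu for $b^+=1$), exactly to $c_1(M)\cdot[\omega]\le 0$ and $c_1(M)^2\ge 0$; as $\lambda>0$, all three summands are then $\ge 0$ and the first equals $[\omega]^2>0$. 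In general, write $M$ as a symplectic blow-up of a minimal model $(M_{\min},\omega_{\min})$ along exceptional spheres $E_1,\dots,E_k$, which leaves the Kodaira dimension unchanged. Each $E_i$ is a spherical class with $c_1(M)\cdot E_i=1$ by adjunction, so monotonicity pins down $\int_{E_i}\omega=\lambda$ for every $i$; substituting the standard blow-up formulas for $[\omega]^2$, $c_1\cdot[\omega]$ and $c_1^2$ into the expression above, the $k$-dependent contributions cancel, leaving $\eta^2=\bigl([\omega_{\min}]-\lambda\,c_1(M_{\min})\bigr)^2>0$ by the minimal case.

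The second step is to re-run the lower-bound machinery with $\eta$ in place of $[\omega]$. The proof of Theorem~\ref{thm:atoroidal} uses the symplectic form only through the zero-divisor $1\otimes[\omega]-[\omega]\otimes 1\in H^2(M\times M;\RR)$, the atoroidality of $[\omega]$, and the non-triviality of $[\omega]^{\dim M/2}$; all three hold for $\eta$ under the hypothesis of part~(i), so the same argument yields $\TC(M)\ge 2\dim M+1$ and hence $\TC(M)=9$. The proof of Theorem~\ref{thm:aspherical} likewise uses the symplectic form only through the asphericity of $[\omega]$ and the non-triviality of $[\omega]^{\dim M/2}$, alongside the group-theoretic hypotheses on $\pi_1(M)$; applied to $\eta$ under the hypotheses of part~(ii), it gives $\TC(M)\in\{2\dim M,\,2\dim M+1\}=\{8,9\}$.

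I expect the main obstacle to lie in the second step: one must return to the arguments of \cite{GM20} and \cite{FM20} and verify that a non-degenerate symplectic $2$-form can be replaced throughout by an arbitrary closed $2$-form (indeed, by an arbitrary degree-$2$ class) that is atoroidal, resp.\ aspherical, and has non-vanishing top power --- i.e.\ that those proofs use $\omega$ only via its cohomology class. A secondary, more routine difficulty is the bookkeeping in the blow-up reduction --- the existence and disjointness of the exceptional spheres, and the normalization of the blow-up forms so that the cancellation in $\eta^2$ is exact --- and, within it, the $b^+=1$ case of Li's inequalities.
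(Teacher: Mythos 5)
Your proposal is correct and follows the same overall strategy as the paper: upper bound from \eqref{eq:upper_bound}, lower bound by feeding the auxiliary class $\eta=[\omega]-\lambda c_1$ into the Grant--Mescher and Farber--Mescher weight machinery, with the crux being $\eta^2\neq 0$. The step you flag as the main obstacle is resolved in the paper in two slightly different ways, and it is worth seeing how. In the toroidal case the paper simply notes (Theorem \ref{thm:atoroidal_TC-wgt}, via the proof of \cite[Corollary 3.6]{GM20}) that $\wgt([\pr_2^*\eta-\pr_1^*\eta])\geq 2$ holds for any atoroidal closed $2$-form, symplectic or not, and then cups up. In the spherical case the paper does \emph{not} re-run the proof of Theorem \ref{thm:aspherical} on $M$ itself: it uses the type-FL hypothesis to get a \emph{finite} complex $K=K(\pi_1(M),1)$, pulls $\eta$ back from $K$ via Proposition \ref{prop:aspherical_iff}, applies the general zero-divisor estimate \cite[Theorem 3]{FM20} (Theorem \ref{thm:aspherical_TC-wgt}) to $(\pr_2^*\Omega-\pr_1^*\Omega)^{4}$ on $K\times K$, and transfers the weight bound to $M\times M$ by naturality of $\wgt$ under $f\times f$; this is exactly where the FL hypothesis enters and it confirms your expectation that only the cohomology class and the group-theoretic hypotheses are used. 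Where you genuinely diverge is the verification of $\eta^2\neq 0$: the paper argues directly on $M$, asserting $K\cdot K\geq 0$ and $K\cdot[\omega]\geq 0$ from $\kappa(M,\omega)\neq-\infty$, which by the paper's own definition of $\kappa$ is immediate only for minimal $M$ (for a blow-up, $K\cdot K$ drops by the number of exceptional classes and can become negative while $\kappa$ is unchanged); your blow-up reduction, using that monotonicity forces every exceptional sphere to have area $\lambda$ so that $\eta$ is pulled back from the minimal model, is the more careful route to the same positivity, and for minimal $M$ no appeal to Taubes-type results is needed since the inequalities are the definition of $\kappa$ used here. One small repair: the monotonicity constant is only assumed non-negative, so treat $\lambda=0$ separately --- there $\eta=[\omega]$, so $\eta^2=[\omega]^2>0$ trivially (and in fact $\lambda=0$ forces minimality, since an exceptional sphere would have zero symplectic area).
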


We refer the reader to Section \ref{sec:preliminaries_symplectic} for the definitions of the notions appearing in Theorem \ref{thm:main}.
Unfortunately, we do not have any specific examples meeting the requirements of Theorem \ref{thm:main} except Theorem \ref{thm:aspherical}.
Instead, we provide some remarks.

\begin{remark}
   It is shown by Gompf \cite[Theorem 0.1]{Go95} that every finitely presentable group can be realized as the fundamental group of a closed 4-dimensional symplectic manifold.
\end{remark}

\begin{remark}
    According to Li--Liu \cite{LL95}, Liu \cite{Liu96}, and Ohta--Ono \cite{OO96a,OO96b},
    every 4-dimensional symplectic manifold with Kodaira dimension $-\infty$ is diffeomorphic to a symplectic blow up of a rational or ruled symplectic manifold.
    That is, it is a blow up of either $S^2\times S^2$, $\CC P^2$, or an $S^2$-bundle over a closed oriented surface $\Sigma_g$ of genus $g$.
    We note that $\CC P^2$ and the total space of an $S^2$-bundle over $S^2$ is simply connected,
    which are excluded from our symplectic manifolds from the beginning.
\end{remark}

\begin{remark}
    One can construct toroidally monotone or spherically monotone symplectic manifolds as follows.
    Let $(M_1,\omega_1)$ be a closed \textit{symplectically atoroidal} (i.e., both $[\omega_1]$ and $c_1(M_1,\omega_1)$ are atoroidal) symplectic manifold.
    For example, every K\"ahler manifold with negative sectional curvature (e.g., $\Sigma_{g\geq 2}$) is symplectically atoroidal, see also \cite{Gu13} for further discussion.
    Let $(M_2,\omega_2)$ be a closed \textit{symplectically aspherical} (i.e., both $[\omega_2]$ and $c_1(M_2,\omega_2)$ are aspherical) symplectic manifold.
    There are various constructions of such manifolds due to \cite{Go98,IKRT04,KRT07}.
    Next, let $(N,\omega_N)$ be a closed strongly monotone symplectic manifold with monotonicity constant $\lambda_N$.
    For instance, consider a smooth complete intersection $N$ in the complex projective space $\mathbb{C}P^{n+k}$ defined by $k$ homogeneous polynomials of degrees $d_1,\ldots,d_k$ where $n>k$.
    If $n+k+1>d_1+\cdots+d_k$, then the manifold $N$ equipped with the symplectic form $\omega_N$ induced from the K\"ahler form of $\mathbb{C}P^{n+k}$ is strongly monotone with $\lambda_N=\bigr(n+k+1-(d_1+\cdots+d_k)\bigr)^{-1}$, see \cite[Example 2.8 (p.~88)]{LM89}.
    Now it is straightforward to see that the product $M_1\times N$ (resp.\ $M_2\times N$) is toroidally monotone (resp.\ spherically monotone) with monotonicity constant $\lambda_N$.
\end{remark}

We prove Theorem \ref{thm:main} in Section \ref{sec:proof}.


\subsection{Lusternik--Schnirelmann category}

The Lusternik--Schnirelmann (LS) category $\cat(X)$ of a path-connected topological space $X$ is another invariant related to $\TC$.
The LS category $\cat(X)$ is defined to be the smallest integer $k$ (or infinity) such that $X$ can be covered by $k$ open subsets $U_1,\ldots,U_k$ each of which is contractible in $X$.
Similarly to $\TC$, it should be noted that for the LS category,
many references use a \textit{normalized} version which is one less than ours.
The connectivity of $X$ provides an upper bound for $\cat$.
That is, if $X$ is an $r$-connected CW-complex, then
\begin{equation}\label{eq:upper_bound_LS}
    \cat(X)\leq\frac{\dim{X}}{r+1}+1,
\end{equation}
see \cite[Theorem 1.50]{CLOT03} for the proof.
Farber \cite[Theorem 5]{Fa03} showed that the LS category gives upper and lower bounds for the topological complexity:
\[
    \cat(X) \leq \TC(X) \leq \cat(X\times X) \leq 2\dim X+1.
\]
We mention that these notions are special cases of the \textit{sectional category} (or \textit{Schwarz genus}) of a fibration \cite{Sch66}.

Now let us collect the results for the LS category for symplectic manifolds $(M,\omega)$.
If $(M,\omega)$ is simply connected and $\dim M=2n$,
then the inequality \eqref{eq:upper_bound_LS} implies that $\cat(M)\leq n+1$.
On the other hand, since $\omega$ is symplectic, the cohomology class $[\omega]^n\in H^{2n}(M;\RR)$ is non-zero;
the existence of such a cohomology class helps us to estimate $\cat$ from below, see Section \ref{sec:category_weight} for details.
In this case, $\cat(M)\geq n+1$.
Therefore, $\cat(M)=n+1$.
So we are interested in \textit{non}-simply connected $(M,\omega)$.

\begin{theorem}[{\cite[Corollary 4.2]{RO99}}]\label{thm:aspherical_LS}
    Let $(M,\omega)$ be a closed symplectic manifold with aspherical $\omega$.
    Then $\cat(M)=\dim{M}+1$.
\end{theorem}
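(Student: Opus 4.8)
The statement to prove is $\cat(M) = \dim M + 1$ for a closed symplectic manifold $(M,\omega)$ with aspherical $\omega$. Write $\dim M = 2n$. Since $\cat$ of a CW-complex of dimension $m$ is always at most $m+1$ (the classical Lusternik–Schnirelmann upper bound from the cellular structure, or \eqref{eq:upper_bound_LS} with $r=0$), we automatically have $\cat(M) \leq 2n+1$. So the whole content is the lower bound $\cat(M) \geq 2n+1$, and the natural tool is cup-length together with the notion of category weight.

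**Key steps.** First I would recall the cup-length lower bound for $\cat$: if there exist cohomology classes $u_1,\dots,u_k$ in positive degree with $u_1 \smile \cdots \smile u_k \neq 0$, then $\cat(X) \geq k+1$. Applying this naively with $u_i = [\omega]$ gives only $\cat(M) \geq n+1$ from $[\omega]^n \neq 0$, which is far short of $2n+1$. The improvement comes from \emph{category weight} (the refinement due to Fadell–Husseini and Rudyak): a class $u$ has $\wgt(u) \geq k$ if $u$ pulls back to zero on every subset $U \subseteq X$ with $\cat_X(U) \leq k$, and one has the sharper bound $\cat(X) \geq \wgt(u_1) + \cdots + \wgt(u_r) + 1$ for any product $u_1 \smile \cdots \smile u_r \neq 0$. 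The decisive fact, which I would invoke from Rudyak–Oprea's work (this is essentially the content of \cite{RO99}), is that when $\omega$ is aspherical the class $[\omega]$ has category weight $2$ rather than $1$: because $\omega$ vanishes on $\pi_2$, its restriction to any open set that is contractible in $M$ — indeed to any open set of category $\leq 2$ in $M$ — is null-cohomologous, since such a set is dominated by a wedge of circles up to the relevant dimension. Hence $\wgt([\omega]) \geq 2$, and since $[\omega]^n \neq 0$ we get
\[
    \cat(M) \geq n\cdot \wgt([\omega]) + 1 \geq 2n + 1.
\]
Combined with the upper bound this forces $\cat(M) = 2n+1 = \dim M + 1$.

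**The main obstacle.** The one genuinely nontrivial input is the claim $\wgt([\omega]) \geq 2$ for aspherical $\omega$; everything else is formal. Proving it requires showing that if $U \subseteq M$ is open with $\cat_M(U) \leq 2$, then $[\omega]|_U = 0$ in $H^2(U;\RR)$. The argument: such a $U$ admits a cover by two open sets contractible in $M$, so up to homotopy $U$ maps to $M$ through a space built by gluing two contractible pieces, i.e. something whose universal-cover picture lets one factor the relevant map through the $2$-skeleton of a $K(\pi_1(M),1)$ — more precisely, one uses that the inclusion $U \hookrightarrow M$ lifts the classifying map and that $[\omega]$, being aspherical, is pulled back (over $\RR$) from $H^2(K(\pi_1(M),1);\RR)$ via the classifying map; since $U$ has category $2$, its classifying map to $K(\pi,1)$ compresses into the $1$-skeleton, where $H^2$ vanishes. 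This is exactly the mechanism behind the Rudyak–Oprea theorem, so I would cite \cite{RO99} for it rather than reprove it, and then the proof reduces to the two-line cup-length-with-weight computation above.
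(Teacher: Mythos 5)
Your proposal is correct and follows essentially the same route as the paper: the paper quotes the statement from Rudyak--Oprea and proves its generalization (Theorem \ref{thm:main_general_LS}) exactly as you do, namely $\cwgt([\omega]-\lambda c_1)\geq 2$ from asphericality (Theorem \ref{thm:aspherical_cwgt}), additivity of category weight under cup products, non-vanishing of the top power, and the dimension upper bound \eqref{eq:upper_bound_LS}; your argument is the special case $\lambda=0$, $u=[\omega]$. Only note that in this paper's notation the category weight is written $\cwgt$, while $\wgt$ is reserved for the $\TC$-weight.
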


Theorem \ref{thm:aspherical_LS} provided a different proof of the Arnold conjecture for such a manifold being true, see \cite[Corollary 4.3]{RO99}.

Let us state another result concerning the LS category for spherically monotone symplectic manifolds.

\begin{theorem}\label{thm:main_LS}
    Let $(M,\omega)$ be a closed 4-dimensional spherically monotone symplectic manifold whose Kodaira dimension is not $-\infty$.
    Then $\cat(M)=5$.
\end{theorem}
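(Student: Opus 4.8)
The plan is to prove $\cat(M)=5$ by sandwiching: establish the upper bound $\cat(M)\le 5$ from the general dimension estimate, and the lower bound $\cat(M)\ge 5$ using the non-triviality of a cup product built from $[\omega]$ together with a suitable cohomology class of positive weight coming from the fundamental group. Since $M$ is a closed $4$-manifold, it has the homotopy type of a CW-complex of dimension $4$, so the general bound $\cat(M)\le\dim M+1=5$ is immediate; thus the whole content is the lower bound.

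For the lower bound I would argue as follows. Because $M$ is $4$-dimensional, spherically monotone, and has Kodaira dimension $\ne-\infty$, it is \emph{not} rational or ruled, so by the classification recalled in the introduction its fundamental group $\pi=\pi_1(M)$ is infinite; in particular $M$ is not simply connected, and $H^1(M;\mathbb Z)$ or at least $H^1(\pi;\mathbb{A})$ for a suitable local system is non-trivial. The key point is that spherical monotonicity, $[\omega]|_{\pi_2(M)}=\lambda\, c_1|_{\pi_2(M)}$ with $\lambda>0$, forces $[\omega]$ to pull back from the $2$-skeleton/classifying space in a controlled way — more precisely one shows there is a class $u\in H^1(\pi;V)$ (with $V$ a $\mathbb{Z}[\pi]$-module, as in the Fox calculus/twisted-coefficient arguments of Rudyak--Oprea and Grant--Mescher) of category weight $\ge 1$, and that the image of $[\omega]$ in a suitable quotient is detected by a product $u\smallsmile v$ with $v$ of weight $\ge 1$ in complementary degree. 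Concretely: one produces classes $\alpha_1,\alpha_2,\alpha_3,\alpha_4$ in positive degrees whose cup product is non-zero in $H^4(M)$ with twisted coefficients and each of which has $\cwgt\ge 1$, whence $\cat(M)\ge 1+\sum\cwgt(\alpha_i)\ge 5$. The role of "Kodaira dimension $\ne -\infty$" is exactly to guarantee $\pi$ is large enough (e.g.\ not finite, not $\mathbb Z/2*\mathbb Z/2$-like in a way that kills $H^1$) so that this weighted product of total length $4$ actually exists; for the rational/ruled cases it would fail.

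The main obstacle is the construction of the positive-weight class pairing against $[\omega]$ in the spherically monotone (as opposed to aspherical) setting: when $\omega$ is aspherical one directly has $[\omega]$ pulled back from $K(\pi,1)$ and Rudyak--Oprea's argument applies verbatim, but here $[\omega]$ need only agree with $\lambda c_1$ on spheres, so I must pass to an auxiliary cohomology class — e.g.\ replace $[\omega]$ by $[\omega]-\lambda c_1$, or work in the quotient of $H^2(M;\mathbb R)$ by the image of $H^2$ of the universal cover, and show the resulting class is non-zero, lifts to $H^2(\pi;\mathbb R)$ (or a twisted version), and still cup-multiplies to something non-zero against $[\omega]^{?}$ in top degree. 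Handling the coefficients correctly (real vs.\ twisted integral, to simultaneously get both non-vanishing of the $4$-fold product and the weight estimate) is the delicate bookkeeping. Once that class is in hand, the weighted cup-length inequality $\cat(M)\ge 1+\cwgt(\alpha_1\smallsmile\cdots\smallsmile\alpha_4)$ closes the argument, and combined with $\cat(M)\le 5$ we conclude $\cat(M)=5$.
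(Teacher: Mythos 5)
Your upper bound $\cat(M)\le\dim M+1=5$ is the same as the paper's, but the lower bound---which you rightly identify as the whole content---is left with a genuine gap, and your sketch points at the wrong mechanism. The paper's argument is much more direct than the twisted-coefficient scheme you outline: spherical monotonicity says precisely that $u=[\omega]-\lambda c_1$ vanishes on $h_S(\pi_2(M))$, i.e.\ $u$ is an \emph{aspherical} degree-$2$ class, and Rudyak--Oprea (Theorem \ref{thm:aspherical_cwgt}) then gives $\cwgt(u)\ge 2$ outright; by Theorem \ref{thm:cateogry_weight_cup_product}, $\cwgt(u^2)\ge 4$, so $\cat(M)\ge 5$ as soon as $u^2\neq 0$. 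You mention replacing $[\omega]$ by $[\omega]-\lambda c_1$ only as one possible fallback and never establish either of the two facts that actually carry the proof: (a) that this class is aspherical and hence already has weight $\ge 2$, which makes your proposed four classes $\alpha_1,\dots,\alpha_4$ of weight $1$ in $H^1$ with twisted coefficients unnecessary---and indeed unavailable, since nothing in the hypotheses of the LS-category statement controls $H^1(M)$ or $H^1(\pi;V)$ (there are no FL or centralizer assumptions here, unlike in the $\TC$ theorem), so your construction has no visible source; and (b) that $u^2\neq 0$.

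Point (b) is also where you misread the role of the hypothesis: ``Kodaira dimension $\neq-\infty$'' is not there to make $\pi_1$ large or to exclude rational/ruled fundamental groups; it enters purely through the intersection form. With $K=-c_1$ and $\lambda\ge 0$, $\kappa(M,\omega)\neq-\infty$ gives $K\cdot K\ge 0$ and $K\cdot[\omega]\ge 0$, hence
\[
([\omega]-\lambda c_1)\cdot([\omega]-\lambda c_1)=([\omega]+\lambda K)\cdot([\omega]+\lambda K)=\int_M\omega\wedge\omega+2\lambda K\cdot[\omega]+\lambda^2\,K\cdot K>0,
\]
so $u^2\neq 0$ in $H^4(M;\RR)$. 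Without this computation your argument cannot close; with it, the detour through the classification of rational/ruled manifolds and the size of $\pi_1$ is superfluous, and the proof is exactly: $\cwgt(u^2)\ge 4$ and $u^2\neq 0$ give $\cat(M)\ge 5$, while the dimension bound gives $\cat(M)\le 5$.
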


We show Theorem \ref{thm:main_LS} in Section \ref{sec:proof}.


\section{Symplectic manifolds}\label{sec:preliminaries_symplectic}

In this section, we recall the definition of ``monotone'' symplectic manifolds and the Kodaira dimension of symplectic 4-manifolds.

\subsection{Monotone symplectic manifolds}\label{subsection:monotonicity}

Let $X$ be a connected CW-complex and $\mathcal{L}X$ denote the space of free loops in $X$.
Given a free homotopy class $\alpha\in [S^1,X]$,
we define $\mathcal{L}_{\alpha}X$ to be the component of $\mathcal{L}X$ with loops representing $\alpha$.

Every element of $\pi_2(X)$ is represented by a map $w\colon S^2\to X$.
Let
\[
	h_S\colon\pi_2(X)\to H_2(X;\mathbb{Z});\quad [w]\mapsto w_*([S^2])
\]
denote the Hurewicz homomorphism,
where $[S^2]$ is the fundamental class of $S^2$
and $w_{\ast}\colon H_2(S^2;\mathbb{Z})\to H_2(X;\mathbb{Z})$ is an induced homomorphism.
Similarly, every element of $\pi_1(\mathcal{L}_{\alpha}X)$ is represented by
a map $w\colon S^1\times S^1\to X$.
We define a homomorphism
\[
	h_T\colon\pi_1(\mathcal{L}_{\alpha}X)\to H_2(X;\mathbb{Z});\quad [w]\mapsto w_*([S^1\times S^1]).
\]

\begin{definition}\label{def:atoroidal_aspherical}
    Let $u\in H^2(X;R)$ where $R=\RR$ or $\ZZ$.
    \begin{enumerate}
        \item $u$ is called \textit{aspherical} if $u$ vanishes on $h_S(\pi_2(X))$.
        \item $u$ is called \textit{atoroidal} if $u$ vanishes on $h_T(\pi_1(\mathcal{L}_{\alpha}X))$ for all $\alpha\in [S^1,X]$.
    \end{enumerate}
\end{definition}

Let $(M,\omega)$ be a connected closed symplectic manifold.
A closed 2-form $\eta$ on $M$ is said to be \textit{aspherical} (resp.\ \textit{atoroidal})
if its cohomology class $[\eta]\in H^2(M;\RR)$ is aspherical (resp.\ atoroidal).

Let us consider the tangent bundle $TM$ of $M$.
An endomorphism $J\colon TM\to TM$ is called an \textit{almost complex structure} if it satisfies $J^2=-\mathrm{id}_{TM}$.
An almost complex structure $J$ is said to be \textit{compatible with $\omega$} if the bilinear form $\omega(\cdot,J\cdot)$ defines a Riemannian metric on the tangent bundle $TM$.
Such a $J$ makes the tangent bundle $TM$ into a complex vector bundle.
Let $c_1=c_1(M,\omega)\in H^2(M;\ZZ)$ denote the first Chern class of the complex vector bundle $(TM,J)$.
It is known that this definition does not depend on the choice of the almost complex structure $J$ compatible with $\omega$,
see, e.g., \cite[Section 4.1]{MS17}.

\begin{definition}\label{def:monotone}
    Let $(M,\omega)$ be a connected closed symplectic manifold.
    \begin{enumerate}
        \item $(M,\omega)$ is called \textit{strongly monotone} (resp.\ \textit{strongly negative monotone})
        if there exists a positive (resp.\ negative) number $\lambda\in\mathbb{R}$ such that
        \[
    	[\omega] = \lambda c_1.
        \]
        \item $(M,\omega)$ is called \textit{toroidally monotone} (resp.\ \textit{toroidally negative monotone})
        if there exists a non-negative (resp.\ negative) number $\lambda\in\mathbb{R}$ such that for all $\alpha\in [S^1,M]$,
        \[
    	[\omega]|_{h_T(\pi_1(\mathcal{L}_{\alpha}M))} = \lambda c_1|_{h_T(\pi_1(\mathcal{L}_{\alpha}M))}.
        \]
        \item $(M,\omega)$ is called \textit{spherically monotone} (resp.\ \textit{spherically negative monotone})
        if there exists a non-negative (resp.\ negative) number $\lambda\in\mathbb{R}$ such that
        \[
    	[\omega]|_{h_S(\pi_2(M))} = \lambda c_1|_{h_S(\pi_2(M))}.
        \]
    \end{enumerate}
\end{definition}

\begin{remark}
    We note that the term \textit{monotone symplectic manifold} can mean \textit{spherically} monotone
    or \textit{strongly} monotone symplectic manifold depending on the literature.
    We will summarize the relationships of the above concepts in the following diagram:
    \[
        \xymatrix{
        & \text{atoroidal} \ar@{=>}[r] \ar@{=>}[d] & \text{aspherical} \ar@{=>}[d] \\
        \text{strongly monotone} \ar@{=>}[r] & \text{toroidally monotone} \ar@{=>}[r] & \text{spherically monotone} \\
        }
    \]
\end{remark}

Given a group $\pi$,
we recall that the \textit{Eilenberg--MacLane space} $K(\pi,1)$ is defined to be a connected CW-complex with fundamental group $\pi$ such that $\pi_i(K(\pi,1))=0$ for any $i\geq 2$.
The following proposition is useful for our purpose.

\begin{proposition}[{\cite[Lemma 2.1]{RT99}}]\label{prop:aspherical_iff}
Let $X$ be a finite CW-complex and $u\in H^2(X;\RR)$ an aspherical cohomology class.
Then for every map $f\colon X\to K(\pi_1(X),1)$ which induces an isomorphism of fundamental groups,
\[
	u\in\Image\left(f^{\ast}\colon H^2(K(\pi_1(X),1);\RR)\to H^2(X;\RR)\right).
\]
\end{proposition}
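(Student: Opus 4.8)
The plan is to reduce to a convenient model of $K(\pi_1(X),1)$ and then run an elementary obstruction-theoretic argument at the level of cellular cochains. I would first note that the image of $f^{\ast}$ in $H^2(X;\RR)$ is independent of the choice of $f$: since maps of $X$ into an aspherical complex are classified up to homotopy by the induced homomorphism on fundamental groups (up to conjugation), any two maps inducing isomorphisms on $\pi_1$ differ by a self-homotopy-equivalence of $K(\pi_1(X),1)$, hence induce the same image on $H^2$. So it suffices to treat one such $f$. I would take $Y\supseteq X$ to be a $K(\pi_1(X),1)$ obtained from $X$ by the standard construction of killing the higher homotopy groups, i.e.\ by attaching cells of dimension $\geq 3$ (first $3$-cells along generators of $\pi_2(X)$, then cells of dimension $\geq 4$ to kill $\pi_3,\pi_4,\dots$). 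Then $Y^{(2)}=X^{(2)}$, the inclusion $e\colon X\hookrightarrow Y$ induces an isomorphism on $\pi_1$, and it is enough to show that $u$ lies in $\Image\bigl(e^{\ast}\colon H^2(Y;\RR)\to H^2(X;\RR)\bigr)$.

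Next I would pass to cellular cochains with real coefficients. Because $X$ and $Y$ share the same $2$-skeleton, $C^i(Y;\RR)=C^i(X;\RR)$ for $i\leq 2$, compatibly with the coboundary in the relevant range. I would choose a cellular $2$-cocycle $c\in C^2(X;\RR)$ representing $u$ and regard it as an element of $C^2(Y;\RR)$. The heart of the argument is the claim that $c$ is \emph{already} a cocycle in the cochain complex of $Y$, that is, $\delta c=0$ in $C^3(Y;\RR)$. Granting this, $c$ represents a class $\bar u\in H^2(Y;\RR)$, and since $e^{\ast}$ is induced on cochains by the (identity) restriction $C^2(Y;\RR)\to C^2(X;\RR)$, one gets $e^{\ast}\bar u=[c]=u$, as desired.

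To prove $\delta c=0$ I would argue cell by cell: since $C^3(Y;\RR)$ is the dual of the free abelian group on the $3$-cells of $Y$, it is enough to show $\langle\delta c,e^3\rangle=0$ for every $3$-cell $e^3$. If $e^3$ already lies in $X$, this holds because $c$ is a cocycle on $X$. If $e^3$ is one of the new $3$-cells, attached along some $\phi\colon S^2\to X^{(2)}$ to kill a class in $\pi_2(X)$, then $\langle\delta c,e^3\rangle=\pm\langle c,\partial_3 e^3\rangle$, where $\partial_3 e^3\in C_2(Y;\ZZ)=C_2(X;\ZZ)$ is the cellular boundary of $e^3$. The key observation — essentially the cellular description of the Hurewicz homomorphism — is that $\partial_3 e^3$ is a $2$-cycle in $X$ representing the homology class $h_S([\phi])\in H_2(X;\ZZ)$, the attaching map $\phi$ carrying the fundamental cycle of $S^2$ onto $\partial_3 e^3$. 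Hence $\langle c,\partial_3 e^3\rangle=\langle u,h_S([\phi])\rangle$, and this vanishes because $u$ is aspherical. Therefore $\delta c=0$, completing the argument.

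The step I expect to cost the most bookkeeping is the last one: pinning down precisely that the cellular boundary of each newly attached $3$-cell represents the Hurewicz image of its attaching class, together with the standard (but slightly fiddly) verification that attaching $3$-cells along generators of $\pi_2(X)$ really does kill $\pi_2$, so that the completed complex $Y$ is aspherical. All of this is routine obstruction theory. One can also avoid the CW model entirely: the Cartan--Leray spectral sequence $H^p(\pi_1(X);H^q(\widetilde X;\RR))\Rightarrow H^{p+q}(X;\RR)$ of the universal cover has $H^1(\widetilde X;\RR)=0$, so its edge homomorphisms identify $\Ker\bigl(H^2(X;\RR)\to H^2(\widetilde X;\RR)\bigr)$ with the image of the inflation map $H^2(\pi_1(X);\RR)\to H^2(X;\RR)$, equivalently with $\Image(f^{\ast})$; and $u$ lies in that kernel exactly because asphericity means $u$ annihilates $h_S(\pi_2(X))$, while $h_S$ factors through the Hurewicz isomorphism $\pi_2(X)\cong H_2(\widetilde X;\ZZ)$ and $H^2(\widetilde X;\RR)=\Hom(H_2(\widetilde X;\ZZ),\RR)$.
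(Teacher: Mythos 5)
Your argument is correct. Note that the paper itself gives no proof of this proposition: it is quoted verbatim from Rudyak--Tralle \cite[Lemma 2.1]{RT99}, so there is no internal proof to compare against. Your cell-attachment argument is a sound, self-contained derivation: the reduction to a single map $f$ via the classification of maps into aspherical complexes is fine; building $Y=K(\pi_1(X),1)$ from $X$ by attaching cells of dimension $\geq 3$ gives $C^{\leq 2}(Y;\RR)=C^{\leq 2}(X;\RR)$, and the only new contributions to $\delta\colon C^2(Y;\RR)\to C^3(Y;\RR)$ come from the $3$-cells killing $\pi_2(X)$, on which $\langle\delta c,e^3\rangle=\langle u,h_S([\phi])\rangle=0$ by asphericity (the later cells, being of dimension $\geq 4$, do not enter). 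The usual published route, and essentially what \cite{RT99} does, is shorter: one invokes Hopf's exact sequence $\pi_2(X)\xrightarrow{h_S}H_2(X;\ZZ)\to H_2(\pi_1(X);\ZZ)\to 0$ (equivalently the five-term exact sequence of the Cartan--Leray spectral sequence), which after dualizing over the divisible group $\RR$ identifies the aspherical classes in $H^2(X;\RR)$ with the image of $H^2(\pi_1(X);\RR)\to H^2(X;\RR)$; your closing remark via the spectral sequence of the universal cover is exactly this argument, and it trades your cochain-level bookkeeping (cellular boundaries of new cells realizing Hurewicz images, verification that the added cells kill $\pi_2$) for standard algebraic machinery. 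Either route establishes the statement; the finiteness of $X$ is not actually needed for it.
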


\subsection{Symplectic 4-manifolds}

Let $(M,\omega)$ be a connected closed 4-dimensional symplectic manifold.
We choose an orientation of $M$ compatible with $\omega$.
Let $Q_M\colon H^2(M;R)\times H^2(M;R)\to R$ denote the intersection form of $M$, where $R=\RR$ or $\ZZ$.
That is, for $u,v\in H^2(M;R)$ we define
\[
    Q_M(u,v) = u\cdot v = \int_M u \smile v.
\]

\begin{definition}\label{def:rational_ruled}
    A closed symplectic 4-manifold is called \textit{minimal} if it contains no symplectically embedded 2-sphere of self-intersection number $-1$.
    Such a 2-sphere is called an \textit{exceptional sphere}.
\end{definition}

By symplectic blowing down \cite{Mc91} exceptional spheres, one can get a decomposition into a connected sum
\[
    M=M_{\mathrm{min}}\#k\overline{\CC P^2},
\]
for some $k\geq 0$ and a minimal symplectic 4-manifold $M_{\mathrm{min}}$,
where $\overline{\CC P^2}$ denotes $\CC P^2$ with opposite orientation.
We call $M_{\mathrm{min}}$ a \textit{minimal model} of $M$.
McDuff \cite{Mc90} showed that the minimal model is unique up to symplectomorphism unless $(M,\omega)$ is rational or ruled in the following sense.

\begin{definition}
    Let $(M,\omega)$ be a closed symplectic 4-manifold.
    \begin{enumerate}
        \item $(M,\omega)$ is \textit{rational} if it is diffeomorphic to $S^2\times S^2$ or a blowup of $\CC P^2$.
        \item $(M,\omega)$ is \textit{ruled} if it is diffeomorphic to a blowup of an $S^2$-bundle over a closed oriented surface $\Sigma_g$.
    \end{enumerate}
\end{definition}

Let $K=-c_1(M,\omega)\in H^2(M;\ZZ)$ denote the \textit{canonical class} of $(M,\omega)$.
This is the first Chern class of the cotangent bundle $T^*M$, considered as a complex vector bundle with respect to some almost complex structure compatible with $\omega$.
Now we define the Kodaira dimension of $(M,\omega)$ (see also \cite[Section 13.4]{MS17}).
The notion of Kodaira dimension was originally defined for smooth projective varieties.

\begin{definition}
    Let $(M,\omega)$ be a minimal symplectic 4-manifold.
    The \textit{Kodaira dimension} of $(M,\omega)$ is defined to be
    \[
    	\kappa(M,\omega)=%
    	\begin{cases}
    		-\infty & \text{if}\ K\cdot K<0\ \text{or}\ K\cdot[\omega]<0,\\
    		0 & \text{if}\ K\cdot K=0\ \text{and}\ K\cdot[\omega]=0,\\
    		1 & \text{if}\ K\cdot K=0\ \text{and}\ K\cdot[\omega]>0,\\
    		2 & \text{if}\ K\cdot K>0\ \text{and}\ K\cdot[\omega]>0,
    	\end{cases}
    \]
    see \cite{MS96,Li06,Li08}.
\end{definition}

According to \cite[Lemma 2.5]{Li06}, the conditions $K\cdot [\omega]=0$ and $K\cdot K\geq 0$ implies that $K\cdot K=0$,
and hence $\kappa(M,\omega)$ is well defined.
The Kodaira dimension of non-minimal $(M,\omega)$ is defined to be that of its minimal models.
Li \cite[Theorem 2.6]{Li06} showed that it is independent of the choice of its minimal models and a diffeomorphism invariant.

According to Li--Liu \cite{LL95}, Liu \cite{Liu96}, and Ohta--Ono \cite{OO96a,OO96b},
a closed symplectic 4-manifold has Kodaira dimension $-\infty$ if and only if it is a blowup of a rational or ruled symplectic 4-manifold.
Li \cite{Li06} studied symplectic 4-manifolds with $\kappa(M,\omega)=0$.
For example, minimal symplectic 4-manifolds with Kodaira dimension zero include K3 surfaces, Enriques surface, and $T^2$-bundles over $T^2$ such as the Kodaira--Thurston manifold.
Baldridge and Li \cite{BL05} studied symplectic 4-manifolds with $\kappa(M,\omega)=1$.
Symplectic 4-manifolds with $\kappa(M,\omega)=2$ are said to have \textit{general type}.

We note that every strongly monotone symplectic manifold $(M,\omega)$ has Kodaira dimension $-\infty$.
Indeed, the condition $[\omega]=\lambda c_1$ for some $\lambda > 0$ implies that
\[
    K\cdot [\omega] = -c_1\cdot [\omega] = -\frac{1}{\lambda} [\omega]\cdot[\omega] = -\frac{1}{\lambda} \int_M \omega\wedge\omega < 0
\]
since $\omega$ is symplectic.
Actually, Ohta and Ono \cite{OO96b} proved that every strongly monotone symplectic manifold is diffeomorphic to either $S^2\times S^2$
or $X_k=\CC P^2\#k\overline{\CC P^2}$ for $0\leq k \leq 8$,
see also Li--Liu \cite{LL95}, McDuff \cite{Mc90}, and Taubes \cite{Ta95,Ta96,Ta00}.


\section{Lower bounds for TC and LS category}

In this section, we introduce cohomological weights that are useful for evaluations from below of the topological complexity and the Lusternik--Schnirelmann category.

\subsection{TC-weight}\label{sec:TC-weight}

We define $\TC$-weight introduced by Farber and Grant \cite{FG07}, which generalizes the notion of category weight \cite{FH92} (see Section \ref{sec:category_weight} for the latter).
Let $X$ be a path-connected topological space
and $A$ a local coefficient system on $X\times X$.

\begin{definition}[\cite{FG07,FG08}]\label{def:TC-weight}
    The \textit{$\TC$-weight} of a cohomology class $u\in H^*(X\times X;A)$ is defined to be
    the largest integer $k$ with the property that $f^*u=0\in H^*(Y;f^{\ast}A)$ for any continuous map $f\colon Y\to X\times X$,
    where $Y$ is a topological space which has an open cover $\{U_1,\ldots,U_k\}$ such that each $U_i$ admits a continuous lift $s_i\colon U_i\to X^I$
    with $p\circ s_i=f|_{U_i}$:
    \[
        \xymatrix{
        & X^I \ar[d]^-p \\
        U_i \ \ar[r]_-{f|_{U_i}} \ar[ru]^-{s_i} & X\times X \\
        }
    \]
    Let $\wgt(u)$ denote the $\TC$-weight of $u$.
\end{definition}

We note that the $\TC$-weight of 0 is $\infty$.
By definition, if $X$ admits a non-zero cohomology class $u$ such that $\wgt(u)\geq k-1$, then we have $\TC(X)\geq k$.
Moreover, $\TC$-weight has the following important property:

\begin{theorem}[{\cite[Proposition 32]{FG07}}]\label{thm:TC-weight_cup_product}
    Let $A$ and $B$ be local coefficient systems on $X\times X$.
    Let $u\in H^i(X\times X;A)$ and $v\in H^j(X\times X;B)$ be cohomology classes.
    Then the cup product $u\smile v\in H^{i+j}(X\times X;A\otimes_{\ZZ}B)$ satisfies
    \[
        \wgt(u\smile v) \geq \wgt(u) + \wgt(v).
    \]
\end{theorem}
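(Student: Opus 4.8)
The plan is to check the defining property of $\TC$-weight by hand. Put $m=\wgt(u)$ and $n=\wgt(v)$. If either weight is infinite, then the corresponding class is zero, hence $u\smile v=0$ and $\wgt(u\smile v)=\infty$; so we may assume $m$ and $n$ are finite. By Definition \ref{def:TC-weight} it suffices to show that $f^{*}(u\smile v)=0$ in $H^{*}(Y;f^{*}(A\otimes_{\ZZ}B))$ for every continuous map $f\colon Y\to X\times X$ such that $Y$ carries an open cover $\{U_1,\dots,U_{m+n}\}$ in which each $U_i$ admits a continuous lift $s_i\colon U_i\to X^I$ with $p\circ s_i=f|_{U_i}$.

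First I would split the cover and set $V=U_1\cup\dots\cup U_m$ and $W=U_{m+1}\cup\dots\cup U_{m+n}$; both are open and $V\cup W=Y$. The map $f|_V\colon V\to X\times X$ together with the open cover $\{U_1,\dots,U_m\}$ of $V$ (each member still admitting the lift $s_i$) witnesses, via $\wgt(u)=m$, that $(f|_V)^{*}u=0$; likewise $(f|_W)^{*}v=0$. Denoting the inclusions by $j_V\colon V\hookrightarrow Y$ and $j_W\colon W\hookrightarrow Y$, this reads $j_V^{*}(f^{*}u)=0$ and $j_W^{*}(f^{*}v)=0$, so from the long exact cohomology sequences of the pairs $(Y,V)$ and $(Y,W)$, taken with the pulled-back local systems $f^{*}A$ and $f^{*}B$, one obtains classes $\bar u\in H^{*}(Y,V;f^{*}A)$ and $\bar v\in H^{*}(Y,W;f^{*}B)$ whose images in absolute cohomology are $f^{*}u$ and $f^{*}v$.

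The final step is to feed these into the relative cup product
\[
    H^{i}(Y,V;f^{*}A)\otimes H^{j}(Y,W;f^{*}B)\longrightarrow H^{i+j}(Y,V\cup W;f^{*}(A\otimes_{\ZZ}B)),
\]
available because $\{V,W\}$ is an excisive couple (both sets are open) and natural with respect to the maps to $H^{*}(Y;-)$; in particular the image of $\bar u\smile\bar v$ under $H^{*}(Y,V\cup W;-)\to H^{*}(Y;-)$ is $f^{*}u\smile f^{*}v=f^{*}(u\smile v)$. But $V\cup W=Y$, so $H^{*}(Y,V\cup W;f^{*}(A\otimes_{\ZZ}B))=0$, forcing $\bar u\smile\bar v=0$ and hence $f^{*}(u\smile v)=0$. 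Since $f$ and the cover were arbitrary, $\wgt(u\smile v)\ge m+n=\wgt(u)+\wgt(v)$.

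I expect the one delicate point to be the construction of the relative cup product with the pulled-back coefficient systems together with its compatibility, under the ``forget the subspace'' maps, with the ordinary cup product on $H^{*}(Y;-)$. At the cochain level this reduces to the elementary fact that a (twisted) cochain vanishing on singular chains in $V$, cupped with one vanishing on chains in $W$, vanishes on chains lying in $V$ or in $W$, combined with the standard observation that for the open couple $\{V,W\}$ such ``small'' chains already compute the homology of $V\cup W$. This is routine, so no further obstacle remains.
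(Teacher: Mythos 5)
Your argument is correct and is essentially the proof of the cited Farber--Grant proposition (the paper itself gives no proof, only the citation): one splits the $(m+n)$-fold cover into $V=U_1\cup\dots\cup U_m$ and $W=U_{m+1}\cup\dots\cup U_{m+n}$, lifts $f^*u$ and $f^*v$ to relative classes in $H^*(Y,V;f^*A)$ and $H^*(Y,W;f^*B)$, and uses the relative cup product landing in $H^*(Y,V\cup W)=H^*(Y,Y)=0$, exactly as you do. One small repair: your reduction of the infinite case via ``infinite weight forces the class to be zero'' is not justified when $\TC(X)=\infty$, but it is also unnecessary --- if, say, $\wgt(u)=\infty$, then for any $k$-fold cover as in Definition \ref{def:TC-weight} one already has $f^*u=0$, hence $f^*(u\smile v)=f^*u\smile f^*v=0$, so $\wgt(u\smile v)=\infty$ and the inequality holds trivially.
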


\begin{definition}
    A cohomology class $u\in H^*(X\times X;A)$ is called a \textit{zero-divisor} if $u|_{\Delta_X}=0\in H^*(\Delta_X;A|_{\Delta_X})$,
    where $\Delta_X$ is the diagonal of $X\times X$.
\end{definition}

Farber and Grant \cite[Section 3]{FG08} showed that a cohomology class $u$ is a zero-divisor if and only if $\wgt(u)\geq 1$.

\subsection{Category weight}\label{sec:category_weight}

We define category weight introduced by Fadell and  Husseini \cite{FH92} and developed by Rudyak \cite{Ru99}.
Let $X$ be a path-connected topological space and $A$ an abelian group.

\begin{definition}[\cite{FH92}]\label{def:category_weight}
    The \textit{category weight} of a cohomology class $u\in H^*(X;A)$ is defined to be
    the largest integer $k$ with the property that $u|_Y=0\in H^*(Y;A)$ for any subspace $Y\subset X$
    which is covered by $k$ sets open in $X$ and contractible in $X$.
    Let $\cwgt(u)$ denote the category weight of $u$.
\end{definition}

We note that the category weight of 0 is $\infty$.
By definition, if $X$ admits a non-zero cohomology class $u$ such that $\cwgt(u)\geq k-1$, then we have $\cat(X)\geq k$.
Moreover, category weight has the following important property:

\begin{theorem}[{\cite[Theorem 1.1]{FH92}}]\label{thm:cateogry_weight_cup_product}
    Let $u\in H^i(X;A)$ and $v\in H^j(X;A)$ be cohomology classes.
    Then the cup product $u\smile v\in H^{i+j}(X;A)$ satisfies
    \[
        \cwgt(u\smile v) \geq \cwgt(u) + \cwgt(v).
    \]
\end{theorem}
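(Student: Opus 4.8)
The plan is to prove the inequality directly from the definition of category weight by the standard ``splitting the cover'' argument, reducing everything to the vanishing of a relative cup product in the group $H^{i+j}(Y,Y;A)=0$. Set $p=\cwgt(u)$ and $q=\cwgt(v)$; I may assume both are finite, since if either equals $\infty$ the corresponding class is zero, the product is zero, and the inequality holds trivially. To show $\cwgt(u\smile v)\geq p+q$, I must verify that $(u\smile v)|_Y=0$ for every subspace $Y\subset X$ admitting a cover by $p+q$ sets $U_1,\dots,U_{p+q}$ that are open in $X$ and contractible in $X$ (if no such cover exists, the condition is vacuously satisfied). I would partition the cover into two blocks, setting $V=U_1\cup\cdots\cup U_p$ and $W=U_{p+1}\cup\cdots\cup U_{p+q}$, so that $Y=V\cup W$. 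Since $V$ is covered by $p$ sets open in $X$ and contractible in $X$, the defining property of $\cwgt(u)\geq p$ gives $u|_V=0$; likewise $v|_W=0$.

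Next I would lift to relative cohomology. Because $u|_V=0$, the long exact sequence of the pair $(Y,V)$ produces a class $\bar u\in H^i(Y,V;A)$ restricting to $u|_Y$ under $H^i(Y,V;A)\to H^i(Y;A)$; symmetrically, $v|_W=0$ yields $\bar v\in H^j(Y,W;A)$ restricting to $v|_Y$. Each $U_i$, being open in $X$ and contained in $Y$, is open in $Y$, so $\{V,W\}$ is an open cover of $Y$ and in particular an excisive couple. Hence the relative cup product is defined and lands in
\[
    \bar u\smile\bar v\in H^{i+j}(Y,V\cup W;A)=H^{i+j}(Y,Y;A)=0.
\]

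Finally I would invoke naturality of the cup product with respect to the inclusions $(Y,\emptyset)\hookrightarrow(Y,V)$ and $(Y,\emptyset)\hookrightarrow(Y,W)$: the relative product $\bar u\smile\bar v$ maps to the absolute product $(u|_Y)\smile(v|_Y)=(u\smile v)|_Y$ under the map $H^{i+j}(Y,V\cup W;A)\to H^{i+j}(Y;A)$. Since the source group is zero, I conclude $(u\smile v)|_Y=0$, which is precisely what is required. The point demanding the most care—rather than a genuine obstacle—is the construction and the naturality of the relative cup product for the excisive couple $\{V,W\}$, together with the implicit coefficient pairing $A\otimes A\to A$ that makes $u\smile v\in H^{i+j}(X;A)$ meaningful in the first place; once these standard pieces of machinery are set up, the argument is purely formal.
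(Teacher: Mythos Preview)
The paper does not supply a proof of this theorem; it is stated with a citation to Fadell--Husseini and then used as a black box. Your argument is exactly the classical one underlying that reference: split a categorical cover of $Y$ into two blocks, lift $u$ and $v$ to relative classes over the respective blocks, and kill the product in $H^{i+j}(Y,Y;A)=0$ via the excisive relative cup product. So there is nothing to compare against, and your proof is correct.

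One small point of precision: under the paper's phrasing, the sets $U_1,\ldots,U_{p+q}$ are open in $X$ and cover $Y$, but they are not assumed to lie inside $Y$; thus in general $V\cup W\supsetneq Y$ and the pair $(Y,V)$ is not literally defined. The fix is immediate---replace $V$ and $W$ by $V\cap Y$ and $W\cap Y$, which are open in $Y$, cover $Y$, and still satisfy $u|_{V\cap Y}=0$ and $v|_{W\cap Y}=0$ (since $u|_V=0$ by $\cwgt(u)\geq p$, and restriction is functorial). Your excisive-couple and naturality steps then go through verbatim.
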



\section{Proofs}\label{sec:proof}

In this section, we prove our main results (Theorems \ref{thm:main} and \ref{thm:main_LS}).
Given a set $X$, let $\pr_1,\pr_2\colon X\times X\to X$ denote the first and second projections, respectively.

\subsection{Proof of Theorem \ref{thm:main}}

We rely on the following estimates for $\TC$-weight given by Grant--Mescher \cite{GM20}, and Farber--Mescher \cite{FM20}.

\begin{theorem}[{\cite[Corollary 3.6 and Proof of Theorem 1.2]{GM20}}]\label{thm:atoroidal_TC-wgt}
    Let $(M,\omega)$ be a closed symplectic manifold with atoroidal $\omega$.
    Then the cohomology class $[\pr_2^*\omega-\pr_1^*\omega]\in H^2(M\times M;\RR)$
    has $\TC$-weight greater than or equal to 2:
    $\wgt([\pr_2^*\omega-\pr_1^*\omega])\geq 2$.
\end{theorem}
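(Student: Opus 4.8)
The plan is to verify the defining property of $\TC$-weight for the class $\bar\omega:=\pr_2^*\omega-\pr_1^*\omega$ directly, following Grant--Mescher. Since $\bar\omega$ restricts to $0$ on the diagonal $\Delta_M\subset M\times M$, it is a zero-divisor, so $\wgt(\bar\omega)\geq 1$ by the Farber--Grant criterion recalled above; the real work is to upgrade this to $\wgt(\bar\omega)\geq 2$. So I would take a space $Y$ with an open cover $\{U_1,U_2\}$ together with a map $f\colon Y\to M\times M$ such that each $f|_{U_i}$ admits a lift $s_i\colon U_i\to M^I$ with $p\circ s_i=f|_{U_i}$, and aim to show $f^*\bar\omega=0$ in $H^2(Y;\RR)$. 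Each $s_i$ is adjoint to a homotopy $H_i\colon U_i\times I\to M$ from $\pr_1\circ f|_{U_i}$ to $\pr_2\circ f|_{U_i}$, so in particular $f^*\bar\omega|_{U_i}=(\pr_2\circ f|_{U_i})^*\omega-(\pr_1\circ f|_{U_i})^*\omega=0$ for $i=1,2$ (and the assertion is trivial if $U_1$ or $U_2$ is empty).

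First I would feed $Y=U_1\cup U_2$ into the Mayer--Vietoris sequence: because $f^*\bar\omega$ restricts to $0$ on both $U_1$ and $U_2$, it equals $\delta(\theta)$ for some $\theta\in H^1(U_1\cap U_2;\RR)$, with $\delta$ the connecting homomorphism. The key is to pin down $\theta$ using the prism (chain-homotopy) operators attached to $H_1$ and $H_2$: fixing a singular cocycle $\phi$ representing $\omega$ and letting $\xi_i\in C^1(U_i;\RR)$ be the prism cochain with $d\xi_i=f^*(\pr_2^*\phi-\pr_1^*\phi)|_{U_i}$, one checks $\theta=[(\xi_1-\xi_2)|_{U_1\cap U_2}]$, so it suffices to prove $\theta=0$. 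Arguing one component of $U_1\cap U_2$ at a time, this reduces to $\langle\theta,c_*[S^1]\rangle=0$ for every loop $c\colon S^1\to U_1\cap U_2$. Pairing $\xi_1-\xi_2$ against $c$ converts the difference of the two prisms over $c$ into the fundamental class of the torus
\[
    T\colon S^1\times S^1\to M,\qquad T(t,s)=\bigl(H_1(c(t),\cdot)\ast\overline{H_2(c(t),\cdot)}\bigr)(s),
\]
obtained by concatenating the two homotopies along $c$, so that $\langle\theta,c_*[S^1]\rangle=\langle[\omega],T_*[S^1\times S^1]\rangle$. For each fixed $t$ the loop $T(t,\cdot)$ lies in one and the same free homotopy class $\alpha\in[S^1,M]$, hence $t\mapsto T(t,\cdot)$ represents a class in $\pi_1(\mathcal{L}_\alpha M)$ carried by $h_T$ to $T_*[S^1\times S^1]$. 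Atoroidality of $[\omega]$ now gives $\langle[\omega],T_*[S^1\times S^1]\rangle=0$, hence $\theta=0$ and $f^*\bar\omega=\delta(\theta)=0$, as desired.

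The main obstacle is the middle step: identifying the Mayer--Vietoris class $\theta$ concretely enough — through the prism operators of the two homotopies $H_1,H_2$ — to recognize its evaluation on loops as the pairing of $[\omega]$ with honest tori in $M$, and then checking that these tori are swept out inside a single component $\mathcal{L}_\alpha M$ of the free loop space so that the atoroidal hypothesis applies. Everything else is routine zero-divisor bookkeeping; indeed the argument above is just the specialization to $u=[\omega]$ of the general $\TC$-weight estimate \cite[Corollary 3.6]{GM20}.
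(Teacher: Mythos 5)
Your argument is correct: reducing $\wgt(\pr_2^*\omega-\pr_1^*\omega)\geq 2$ to a two-set cover, expressing the class via Mayer--Vietoris and the prism cochains of the two homotopies, and recognizing the evaluation on a loop $c$ as the pairing of $[\omega]$ with the torus obtained by concatenating $H_1(c(t),\cdot)$ with $\overline{H_2(c(t),\cdot)}$ --- which sweeps out a single component $\mathcal{L}_\alpha M$, so atoroidality kills it --- is exactly the right mechanism, and the remaining points (sign conventions, $H_1$ being generated by loops, free versus based loops in $\mathcal{L}_\alpha M$) are routine. Note that the paper itself gives no proof of this statement but imports it from Grant--Mescher \cite[Corollary 3.6 and proof of Theorem 1.2]{GM20}, and your reconstruction follows essentially that same route, so there is nothing genuinely different to compare.
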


\begin{theorem}[{\cite[Theorem 3]{FM20}}]\label{thm:aspherical_TC-wgt}
    Let $X$ be a connected aspherical finite cell complex and $A$ a local coefficient system on $X\times X$.
    Let $u\in H^k(X\times X;A)$ be a zero-divisor, where $k\geq 1$.
    Assume that the centralizer of every non-trivial element of the fundamental group $\pi_1(X)$ is infinite cyclic.
    Then $\wgt(u)\geq k-1$.
\end{theorem}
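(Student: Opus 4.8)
The plan is to use the asphericity of $X$ to convert the weight estimate into a computation in the cohomology of $\pi=\pi_1(X)$, and then to let the centralizer hypothesis supply the vanishing that powers an induction. First I would identify the free path fibration $p\colon X^I\to X\times X$ up to fibre homotopy equivalence. Since $X\simeq K(\pi,1)$, the total space $X^I$ deformation retracts onto the constant paths, so $p$ is fibre homotopy equivalent to the fibration $B\Delta\pi\to B(\pi\times\pi)$ attached to the diagonal subgroup $\Delta\pi\subset\pi\times\pi$; its fibre is the discrete $(\pi\times\pi)$-set $\pi$ with the two-sided action $(a,b)\cdot g=agb^{-1}$. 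Under this identification the condition that $u$ be a zero-divisor, $u|_{\Delta_X}=0$, becomes the vanishing of the restriction $\mathrm{res}^{\pi\times\pi}_{\Delta\pi}u=0\in H^k(\Delta\pi;A|_{\Delta\pi})$, which is the same as $p^*u=0$.

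Next I would invoke Schwarz's description of sectional-category weight: $\wgt(u)\geq m$ holds precisely when $u$ pulls back to zero on the total space of the $m$-fold fibrewise join of $p$, whose fibre is the iterated join $\pi^{*m}$. For $m=1$ this recovers $\wgt(u)\geq 1\iff p^*u=0$, so the target is to show that a degree-$k$ zero-divisor dies on the $(k-1)$-fold join. Since $\pi^{*(k-1)}$ is $(k-3)$-connected with reduced cohomology concentrated in degree $k-2$, the Serre spectral sequence shows that $\Ker(p_{k-2}^*)$ on $H^k$ is exactly the image of the single transgression-type differential $d_{k-1}$, and that this differential is computed by an iterated connecting homomorphism. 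Concretely, I would form the sequence of $(\pi\times\pi)$-modules $M_0=A$ and $M_{i+1}=\mathrm{Coind}_{\Delta\pi}^{\pi\times\pi}(M_i|_{\Delta\pi})/M_i$, so that Shapiro's lemma identifies $H^*(\pi\times\pi;\mathrm{Coind}(M_i|_{\Delta\pi}))$ with $H^*(\Delta\pi;M_i|_{\Delta\pi})$ and the restriction map with $p^*$. The resulting connecting maps $\delta_i\colon H^{k-i-1}(\pi\times\pi;M_{i+1})\to H^{k-i}(\pi\times\pi;M_i)$ raise weight by one, and $\wgt(u)\geq k-1$ follows once $u$ lifts through the composite $\delta_0\circ\cdots\circ\delta_{k-2}$.

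The induction then reduces to lifting a class successively through each $\delta_i$, whose only obstruction is its image under restriction to $\Delta\pi$. Here the centralizer hypothesis enters decisively. The first lift is exactly the zero-divisor condition $\mathrm{res}^{\pi\times\pi}_{\Delta\pi}u=0$. For the later stages, by Mackey's formula the restriction $\mathrm{Coind}_{\Delta\pi}^{\pi\times\pi}(N)|_{\Delta\pi}$ decomposes over the double cosets $\Delta\pi\backslash(\pi\times\pi)/\Delta\pi$, i.e.\ over conjugacy classes of $\pi$, with the summand of the class of $g$ coinduced from a subgroup contained in the centralizer $Z(g)$; the trivial class contributes precisely the copy of $M_i$ that is removed when forming $M_{i+1}$. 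Hence for $i\geq 1$ the module $M_i|_{\Delta\pi}$ has no trivial-class summand, and Shapiro's lemma gives $H^*(\Delta\pi;M_i|_{\Delta\pi})=\bigoplus_{[g]\neq 1}H^*(Z(g);-)$. For $g\neq 1$ the hypothesis yields $Z(g)\cong\ZZ$, of cohomological dimension $1$, so $\bigoplus_{[g]\neq 1}H^{\geq 2}(Z(g);-)=0$. Every obstruction beyond the first occurs in degree $\geq 2$, so it vanishes, and the class lifts through all $k-1$ stages; this gives $u\in\Image(\delta_0\circ\cdots\circ\delta_{k-2})$ and hence $\wgt(u)\geq k-1$.

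The main obstacle I anticipate is the bookkeeping linking the geometry to the algebra: verifying that the transgression $d_{k-1}$ in the join spectral sequence genuinely coincides with the iterated connecting homomorphism built from the $M_i$, and tracking the Mackey decomposition through each iterate so that every non-diagonal summand is provably governed by a centralizer $Z(g)\cong\ZZ$ rather than a larger subgroup. The degenerate cases must be separated out, since the vanishing $H^{\geq 2}(\ZZ;-)=0$ only bites in degrees $\geq 2$: for $k=1$ the statement is vacuous, and for $k=2$ it is merely the zero-divisor property $\wgt(u)\geq 1$.
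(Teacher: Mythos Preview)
The paper does not contain its own proof of this statement: Theorem~\ref{thm:aspherical_TC-wgt} is quoted verbatim from \cite[Theorem~3]{FM20} and is used as a black box in the proof of Theorem~\ref{thm:main_general_spherically}. There is therefore no argument in the present paper to compare your proposal against.

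That said, your outline is in the spirit of the Farber--Mescher argument. Their proof also passes through the identification of the path fibration with $B\Delta\pi\to B(\pi\times\pi)$ and hinges on an inductive lifting through connecting homomorphisms, but it is organised around the canonical $\TC$-class $\mathfrak{v}\in H^1(\pi\times\pi;I)$ (with $I$ the augmentation ideal of $\ZZ[\pi]$ viewed as a $\pi\times\pi$-module): every zero-divisor $u$ in degree $k$ is exhibited as $\mathfrak{v}\smile w$ for some class $w$ of degree $k-1$, and the centralizer hypothesis is used to force $w$ to be a zero-divisor again, so that induction and the multiplicativity of $\TC$-weight give $\wgt(u)\geq k-1$. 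Your coinduction/Mackey formulation and their canonical-class formulation are two packagings of the same long exact sequence; the main thing your write-up would need to pin down carefully is the claim that the obstruction at each stage beyond the first genuinely lands in $\bigoplus_{[g]\neq 1}H^{\geq 2}(Z(g);-)$, since the Mackey double-coset decomposition for coinduction over infinite groups requires some care (product versus direct sum, and tracking which twist of the coefficients appears on each summand).
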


In view of Theorems \ref{thm:atoroidal_TC-wgt} and \ref{thm:aspherical_TC-wgt},
we can generalize Theorems \ref{thm:atoroidal} and \ref{thm:aspherical} as follows:

\begin{theorem}\label{thm:main_general_toroidally}
    Let $(M,\omega)$ be a $2n$-dimensional toroidally monotone or toroidally negative monotone symplectic manifold with monotonicity constant $\lambda$.
    If $([\omega]-\lambda c_1)^n\in H^{2n}(M;\RR)$ is non-zero, then $\TC(M)=4n+1$.
\end{theorem}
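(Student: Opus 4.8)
The plan is to produce a non-zero class on $M\times M$ whose $\TC$-weight is at least $4n$; by the definition of $\TC$-weight this gives $\TC(M)\geq 4n+1$, and together with the general upper bound it yields the claimed equality. Put $\eta:=[\omega]-\lambda c_1\in H^2(M;\RR)$. The defining condition of a toroidally monotone (resp.\ toroidally negative monotone) manifold is precisely that $\eta$ vanishes on $h_T(\pi_1(\mathcal L_\alpha M))$ for every free homotopy class $\alpha\in[S^1,M]$; that is, $\eta$ is an \emph{atoroidal} cohomology class in the sense of Definition \ref{def:atoroidal_aspherical}(ii). Note that the sign of $\lambda$ plays no role here, so the two cases are treated uniformly.

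Next I would feed $\eta$ into the weight estimate underlying Theorem \ref{thm:atoroidal_TC-wgt}. The argument of Grant--Mescher (Corollary 3.6 of \cite{GM20}, which is exactly what drives the proof of Theorem \ref{thm:atoroidal_TC-wgt}) uses only that the class in question is atoroidal, not that it is represented by a symplectic form; applied to $\eta$ it gives $\wgt(\bar\eta)\geq 2$, where $\bar\eta:=\pr_2^*\eta-\pr_1^*\eta\in H^2(M\times M;\RR)$. (That $\bar\eta$ is at least a zero-divisor is immediate, since it restricts to $\eta-\eta=0$ on the diagonal $\Delta_M$.) Applying the cup-product subadditivity of $\TC$-weight (Theorem \ref{thm:TC-weight_cup_product}) repeatedly then gives $\wgt(\bar\eta^{\smile 2n})\geq 4n$.

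It remains to check that $\bar\eta^{\smile 2n}\neq 0$. Since $\bar\eta$ has even degree, the binomial theorem together with the K\"unneth isomorphism $H^*(M\times M;\RR)\cong H^*(M;\RR)\otimes H^*(M;\RR)$ gives
\[
    \bar\eta^{\smile 2n}=\sum_{j=0}^{2n}(-1)^{j}\binom{2n}{j}\,\eta^{\,2n-j}\otimes\eta^{\,j}.
\]
Because $\dim M=2n$, the factor $\eta^{\,2n-j}\in H^{2(2n-j)}(M;\RR)$ vanishes whenever $2n-j>n$, i.e.\ whenever $j<n$, and likewise $\eta^{\,j}$ vanishes whenever $j>n$; hence only the middle term $j=n$ survives and
\[
    \bar\eta^{\smile 2n}=(-1)^{n}\binom{2n}{n}\,\eta^{\,n}\otimes\eta^{\,n},
\]
which is non-zero exactly because $\eta^{\,n}=([\omega]-\lambda c_1)^n\neq 0$ by hypothesis. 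Therefore $\TC(M)\geq 4n+1$. Conversely, $M$ is a connected closed $2n$-manifold, hence a connected finite CW-complex of dimension $2n$, so the general upper bound \eqref{eq:upper_bound} with $r=0$ gives $\TC(M)\leq 2\dim M+1=4n+1$. Combining the two inequalities yields $\TC(M)=4n+1$.

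The main obstacle is the second step: one must verify that the Grant--Mescher $\TC$-weight argument genuinely applies to the possibly non-symplectic (and possibly ``non-positive'') class $\eta$ in place of a symplectic form. Concretely this amounts to inspecting their proof and confirming that its only input is the vanishing $\eta|_{h_T(\pi_1(\mathcal L_\alpha M))}=0$ for all $\alpha$ --- all of the monotonicity hypothesis is packaged into precisely that vanishing. Once this is granted, the remaining steps are the elementary K\"unneth bookkeeping above and the invocation of the standard upper bound.
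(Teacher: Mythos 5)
Your proposal is correct and follows essentially the same route as the paper: observe that $[\omega]-\lambda c_1$ is atoroidal, apply the Grant--Mescher weight estimate (noting, as the paper also does, that it only needs atoroidality and not symplecticity), use cup-product subadditivity of $\TC$-weight, and detect non-vanishing of the $2n$-th power via the middle K\"unneth term before invoking the dimensional upper bound. Your K\"unneth bookkeeping is in fact slightly more explicit than the paper's (you show all non-middle terms vanish for degree reasons, whereas the paper only notes the middle summand is non-zero), but the argument is the same.
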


\begin{theorem}\label{thm:main_general_spherically}
    Let $(M,\omega)$ be a $2n$-dimensional spherically monotone or spherically negative monotone symplectic manifold with monotonicity constant $\lambda$.
    Assume that the fundamental group $\pi_1(M)$ is of type FL and every non-trivial element of the center is infinitely cyclic.
    If $([\omega]-\lambda c_1)^n\in H^{2n}(M;\RR)$ is non-zero, then $\TC(M)$ is either $4n$ or $4n+1$.
\end{theorem}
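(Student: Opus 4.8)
The plan is to sandwich $\TC(M)$ between the elementary dimensional upper bound $4n+1$ and a lower bound of $4n$ produced by a $\TC$-weight computation carried out on a classifying space of $\pi_1(M)$, in the spirit of Farber--Mescher's proof of Theorem~\ref{thm:aspherical}. Write $\pi=\pi_1(M)$ and put $\eta=[\omega]-\lambda c_1\in H^2(M;\RR)$. The hypothesis of spherical (possibly negative) monotonicity says precisely that $\eta$ vanishes on $h_S(\pi_2(M))$, i.e.\ that $\eta$ is an aspherical class in the sense of Definition~\ref{def:atoroidal_aspherical}~(i). Since $M$ is a closed manifold, hence a finite CW-complex, Proposition~\ref{prop:aspherical_iff} applies: for a map $f\colon M\to K(\pi,1)$ inducing an isomorphism on $\pi_1$ there is a class $\bar\eta\in H^2(K(\pi,1);\RR)$ with $f^*\bar\eta=\eta$. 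I would then introduce the zero-divisor $\bar u=\pr_2^*\bar\eta-\pr_1^*\bar\eta\in H^2(K(\pi,1)\times K(\pi,1);\RR)$, which restricts to $\bar\eta-\bar\eta=0$ on the diagonal, and study its power $\bar u^{2n}\in H^{4n}(K(\pi,1)\times K(\pi,1);\RR)$, again a zero-divisor.

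The first substantive step is to verify $\bar u^{2n}\neq 0$. Since $\pr_i\circ(f\times f)=f\circ\pr_i$ we have $(f\times f)^*\bar u=\pr_2^*\eta-\pr_1^*\eta$, so
\[
    (f\times f)^*\bar u^{2n}=(\pr_2^*\eta-\pr_1^*\eta)^{2n}=\sum_{j=0}^{2n}\binom{2n}{j}(-1)^j\,\pr_1^*\eta^j\smile\pr_2^*\eta^{2n-j}.
\]
For dimension reasons $\eta^k=0$ in $H^{2k}(M;\RR)$ whenever $k>n$, so every summand with $j>n$ (because of the factor $\pr_1^*\eta^j$) and every summand with $j<n$ (because of the factor $\pr_2^*\eta^{2n-j}$) vanishes, leaving only $(-1)^n\binom{2n}{n}\,\pr_1^*\eta^n\smile\pr_2^*\eta^n$. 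By the Künneth theorem this external product is nonzero precisely because $\eta^n\neq 0$ in $H^{2n}(M;\RR)$, which is the standing hypothesis; hence $(f\times f)^*\bar u^{2n}\neq 0$ and a fortiori $\bar u^{2n}\neq 0$.

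The second step is the weight estimate and the conclusion. The space $K(\pi,1)$ is aspherical, and the hypotheses on $\pi$ — that it is of type FL and that the centralizer of every nontrivial element is infinite cyclic — are exactly those needed to invoke Theorem~\ref{thm:aspherical_TC-wgt} with $X=K(\pi,1)$ and $\RR$ coefficients. Applied to the nonzero zero-divisor $\bar u^{2n}\in H^{4n}(K(\pi,1)\times K(\pi,1);\RR)$, it gives $\wgt(\bar u^{2n})\geq 4n-1$. Since $\TC$-weight does not decrease under pullback along a map of the form $g\times g$ — given a test space with local lifts of the path fibration over $M$, post-composing each lift with $g$ produces a cover relative to the path fibration over $K(\pi,1)$ of the same cardinality — we obtain
\[
    \wgt\bigl((\pr_2^*\eta-\pr_1^*\eta)^{2n}\bigr)=\wgt\bigl((f\times f)^*\bar u^{2n}\bigr)\geq\wgt(\bar u^{2n})\geq 4n-1,
\]
and this class is nonzero in $H^{4n}(M\times M;\RR)$ by the first step. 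Hence $\TC(M)\geq 4n$. On the other hand $M$ is a path-connected finite CW-complex of dimension $2n$, so \eqref{eq:upper_bound} with $r=0$ gives $\TC(M)\leq 2\dim M+1=4n+1$. Combining the two bounds shows $\TC(M)\in\{4n,4n+1\}$.

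The step I expect to require the most care is the application of Theorem~\ref{thm:aspherical_TC-wgt} to $K(\pi,1)$: that theorem is stated for a finite cell complex, whereas here $\pi$ is only assumed of type FL. This is handled exactly as in Farber--Mescher's passage from Theorem~\ref{thm:aspherical_TC-wgt} to Theorem~\ref{thm:aspherical}, since the proof of the former uses only that the cellular $\ZZ\pi$-chain complex of the universal cover of $K(\pi,1)$ is a finite complex of finitely generated free $\ZZ\pi$-modules. The remaining ingredients — the binomial collapse, the Künneth identification, and the monotonicity of $\wgt$ under $g\times g$ — are routine.
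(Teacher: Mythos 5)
Your proposal is correct and follows essentially the same route as the paper's proof: it pulls the aspherical class $[\omega]-\lambda c_1$ back from a $K(\pi_1(M),1)$ via Proposition~\ref{prop:aspherical_iff}, applies Theorem~\ref{thm:aspherical_TC-wgt} there, transfers the weight estimate to $M\times M$ by functoriality of $\TC$-weight under $f\times f$ (which is exactly the paper's inline test-map argument), detects nonvanishing of $\bigl(\pr_2^*([\omega]-\lambda c_1)-\pr_1^*([\omega]-\lambda c_1)\bigr)^{2n}$ through the middle K\"unneth term, and concludes with the dimensional upper bound \eqref{eq:upper_bound}. The only cosmetic difference is in handling the finiteness of the classifying space: the paper takes a finite CW model of $K(\pi_1(M),1)$ directly from the type FL hypothesis via \cite[Theorem VIII.7.1]{Br82}, whereas you instead argue that the Farber--Mescher proof needs only a finite free resolution, which is a fine but not genuinely different resolution of the same point.
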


For example, direct products of manifolds meeting the requirements of Theorem \ref{thm:main} satisfy the conditions of Theorem \ref{thm:main_general_toroidally} or \ref{thm:main_general_spherically}.
The proofs of Theorems \ref{thm:main_general_toroidally} and \ref{thm:main_general_spherically} are very strongly influenced by \cite{GM20,FM20}.

\begin{proof}[Proof of Theorem \ref{thm:main_general_toroidally}]
If $(M,\omega)$ is toroidally monotone or toroidally negative monotone with monotonicity constant $\lambda$,
then there exists $\lambda\in\RR$ such that for any $\alpha\in [S^1,M]$,
\[
    [\omega]|_{h_T(\pi_1(\mathcal{L}_{\alpha}M))} = \lambda c_1|_{h_T(\pi_1(\mathcal{L}_{\alpha}M))}.
\]
That is,
\[
    ([\omega] - \lambda c_1)|_{h_T(\pi_1(\mathcal{L}_{\alpha}M))} = 0.
\]
It implies that the cohomology class $[\omega] - \lambda c_1$ is atoroidal.
Let $\eta$ be a closed 2-form on $M$ that represents $[\omega] - \lambda c_1\in H^2(M;\RR)$.
Although $\eta$ need not be symplectic, one can apply Theorem \ref{thm:atoroidal_TC-wgt} for the atoroidal closed 2-form $\eta$
(see also the proof of \cite[Corollary 3.6]{GM20}).
Therefore, we obtain the estimate $\wgt([\pr_2^*\eta-\pr_1^*\eta])\geq 2$.
By Theorem \ref{thm:TC-weight_cup_product},
\[
    \wgt([\pr_2^*\eta-\pr_1^*\eta]^{2n}) \geq 2n\cdot\wgt([\pr_2^*\eta-\pr_1^*\eta]) \geq 4n.
\]

By the assumption, $[\eta]^n=([\omega]-\lambda c_1)^n\in H^{2n}(M;\RR)$ is non-zero.
Therefore, the cohomology class $[\pr_2^*\eta-\pr_1^*\eta]^{2n}$ is also non-zero
since it contains the non-zero summand
\[
    (-1)^n\binom{2n}{n}\pr_2^*[\eta]^n\smile\pr_1^*[\eta]^n = (-1)^n\binom{2n}{n}[\eta]^n\times [\eta]^n.
\]
Hence $\TC(M)\geq 4n+1$.
By \eqref{eq:upper_bound}, we conclude that $\TC(M) = 4n+1$.
\end{proof}

\begin{proof}[Proof of Theorem \ref{thm:main_general_spherically}]
If $(M,\omega)$ is spherically monotone or spherically negative monotone with monotonicity constant $\lambda$,
then there exists $\lambda\in\RR$ such that
\[
    [\omega]|_{h_S(\pi_2(M))} = \lambda c_1|_{h_S(\pi_2(M))}.
\]
That is,
\[
    ([\omega] - \lambda c_1)|_{h_S(\pi_2(M))} = 0.
\]
It implies that the cohomology class $[\omega] - \lambda c_1$ is aspherical.

Since $\pi_1(M)$ is of type FL, there exists a finite CW-complex $K=K(\pi_1(M),1)$, see \cite[Theorem VIII.7.1]{Br82}.
Let $f\colon M\to K$ be a continuous map which induces an isomorphism of fundamental groups.
Applying Proposition \ref{prop:aspherical_iff} for $[\omega] - \lambda c_1$ yields that
there exists a cohomology class $\Omega\in H^2(K;\RR)$ such that $f^*\Omega = [\omega] - \lambda c_1$.

Let us consider the cohomology class $(\pr_2^*\Omega-\pr_1^*\Omega)^{2n} \in H^{4n}(K\times K;\RR)$.
It is a zero-divisor since $(\pr_2^*\Omega-\pr_1^*\Omega)^{2n}|_{\Delta_K}=(\Omega-\Omega)^{2n}=0$.
Applying Theorem \ref{thm:aspherical_TC-wgt} for $(\pr_2^*\Omega-\pr_1^*\Omega)^{2n}$,
we obtain that the estimate $\wgt\bigl((\pr_2^*\Omega-\pr_1^*\Omega)^{2n}\bigr)\geq 4n-1$.

Now consider the zero-divisor $\bigl(\pr_2^*([\omega]-\lambda c_1)-\pr_1^*([\omega]-\lambda c_1)\bigr)^{2n} \in H^{4n}(M\times M;\RR)$.
Let $g\colon Y\to M\times M$ be a continuous map,
where $Y$ is a topological space which has an open cover $\{U_1,\ldots,U_{4n-1}\}$ such that each $U_i$ admits a continuous lift $s_i\colon U_i\to M^I$ with $p\circ s_i=g|_{U_i}$ (recall Definition \ref{def:TC-weight}).
Then
\begin{align*}
    g^*\left(\bigl(\pr_2^*([\omega]-\lambda c_1)-\pr_1^*([\omega]-\lambda c_1)\bigr)^{2n}\right)%
    & = g^*\bigl((f\times f)^*(\pr_2^*\Omega-\pr_1^*\Omega)^{2n}\bigr) \\
    & = \bigl((f\times f)\circ g\bigr)^*\bigl((\pr_2^*\Omega-\pr_1^*\Omega)^{2n}\bigr) = 0.
\end{align*}
Thus $\wgt \bigl(\pr_2^*([\omega]-\lambda c_1)-\pr_1^*([\omega]-\lambda c_1)\bigr)^{2n} \geq 4n-1.$

Moreover, the cohomology class $\bigl(\pr_2^*([\omega]-\lambda c_1)-\pr_1^*([\omega]-\lambda c_1)\bigr)^{2n}$ is non-zero
since it contains the non-zero summand
\[
    (-1)^n\binom{2n}{n}\pr_2^*([\omega]-\lambda c_1)^n\smile\pr_1^*([\omega]-\lambda c_1)^n = (-1)^n\binom{2n}{n}([\omega]-\lambda c_1)^n\times ([\omega]-\lambda c_1)^n,
\]
by the assumption.
Threfore, we obtain $\TC(M)\geq 4n$.
By \eqref{eq:upper_bound}, we conclude that $\TC(M)$ is either $4n$ or $4n+1$.
\end{proof}

Now we are in a position to prove the main theorem (Theorem \ref{thm:main}).

\begin{proof}[Proof of Theorem \ref{thm:main}]
Let $(M,\omega)$ be a toroidally monotone or spherically monotone symplectic 4-manifold with monotonicity constant $\lambda$.
Assume $(M,\omega)$ that does not have Kodaira dimension $-\infty$.
If $(M,\omega)$ is spherically monotone, we assume that the fundamental group $\pi_1(M)$ is of type FL and every non-trivial element of the center is infinitely cyclic.
By Theorems \ref{thm:main_general_toroidally} and \ref{thm:main_general_spherically},
it is enough to show that $([\omega]-\lambda c_1)^2\in H^2(M;\RR)$ is a non-zero cohomology class,
where $c_1=c_1(M,\omega)$ is the first Chern class of $(M,\omega)$.

Let $K=-c_1\in H^2(M;\ZZ)$ denote the canonical class of $(M,\omega)$.
Since $\kappa(M,\omega)\neq -\infty$, we have $K\cdot K\geq 0$ and $K\cdot [\omega]\geq 0$.
Then,
\begin{align*}
    ([\omega]-\lambda c_1)\cdot ([\omega]-\lambda c_1) & = ([\omega]+\lambda K)\cdot ([\omega]+\lambda K) \\
    & = \int_M \omega\wedge\omega +2\lambda K\cdot [\omega] + \lambda^2 K\cdot K > 0.
\end{align*}
In particular, $([\omega]-\lambda c_1)^2$ is non-zero as a cohomology class.
This completes the proof.
\end{proof}


\subsection{Proof of Theorem \ref{thm:main_LS}}

We make use of the following estimate for category weight given by Rudyak and Oprea \cite{RO99}.

\begin{theorem}[{\cite[Theorem 4.1]{RO99}}]\label{thm:aspherical_cwgt}
    Let $X$ be a connected CW-complex and $u\in H^2(X;\RR)$ a non-zero cohomology class.
    If $u$ is aspherical, then $\cwgt(u)\geq 2$.
\end{theorem}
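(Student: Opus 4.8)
The plan is to verify the defining property of category weight directly. I must show that $u|_Y = 0 \in H^2(Y;\RR)$ for every subspace $Y \subseteq X$ admitting a cover $Y = U_1 \cup U_2$ by two sets that are open in $X$ and contractible in $X$. Replacing $U_i$ by $U_i \cap Y$, I may assume each $U_i$ is open in $Y$ and that the inclusion $U_i \hookrightarrow X$ is null-homotopic; I fix null-homotopies $G_i\colon U_i \times I \to X$ from the inclusion to a constant map. Since $\RR$ is a field, by universal coefficients it suffices to prove that $\langle u, \iota_* c\rangle = 0$ for every $c \in H_2(Y;\RR)$, where $\iota\colon Y \hookrightarrow X$ is the inclusion and I identify $\langle u|_Y, c\rangle = \langle u, \iota_* c\rangle$.

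First I would manufacture a map out of a suspension from the two null-homotopies. Writing $A = U_1 \cap U_2$, the inclusion $A \hookrightarrow X$ factors through both $U_1$ and $U_2$, so $G_1$ and $G_2$ restrict to two null-homotopies of the \emph{same} map $A \to X$. Each null-homotopy is a map out of the cone on $A$; gluing the two cones along $A$ produces a map $\Phi\colon \Sigma A \to X$ from the unreduced suspension of $A$. Next comes the key homological identity. Given a cycle representing $c \in H_2(Y;\RR)$, after barycentric subdivision relative to the open cover $\{U_1,U_2\}$ of $Y$ I may split it as $c = c_1 + c_2$ with $c_i$ a chain in $U_i$; then $b := \partial c_1 = -\partial c_2$ is a $1$-cycle supported in $A$. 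Applying $G_i$ to the prism $c_i \times I$ and reading off the boundary (the top faces collapse to constant chains, which are null-homologous and drop out in degree $2$), I would obtain in $H_2(X;\RR)$ the identity
\[
    \iota_* c = \pm\, \Phi_*\bigl(\Sigma[b]\bigr),
\]
where $\Sigma[b] \in H_2(\Sigma A)$ is the suspension of $[b] \in H_1(A;\RR)$; concretely $\Phi_*\Sigma[b]$ is the difference of the two tracks swept out by contracting $b$ inside $U_1$ and inside $U_2$.

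Finally I would invoke asphericity to kill the right-hand side. Writing $[b] = \sum_j r_j\,\alpha_j$ with $r_j \in \RR$ and each $\alpha_j \in H_1(A;\ZZ)$ represented by a loop $\gamma_j\colon S^1 \to A$ (using $H_1 = \pi_1^{\mathrm{ab}}$), I obtain $\psi = \bigvee_j \gamma_j\colon \bigvee_j S^1 \to A$ with $\psi_*\bigl(\sum_j r_j[S^1]\bigr) = [b]$. Since $\Sigma\bigl(\bigvee_j S^1\bigr) \cong \bigvee_j S^2$ carries fundamental classes to fundamental classes, the class
\[
    \Phi_*\Sigma[b] = (\Phi \circ \Sigma\psi)_*\Bigl(\textstyle\sum_j r_j\,\Sigma[S^1]\Bigr)
\]
is a real combination of images of maps $S^2 \to X$, hence lies in the $\RR$-span of $h_S(\pi_2(X))$. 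As $u$ is aspherical it vanishes on $h_S(\pi_2(X))$, so $\langle u, \iota_* c\rangle = \pm\langle u, \Phi_*\Sigma[b]\rangle = 0$. Since $c$ was arbitrary, $u|_Y = 0$, and therefore $\cwgt(u) \geq 2$.

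The main obstacle is making the homological identity $\iota_* c = \pm\Phi_*\Sigma[b]$ rigorous at the chain level: one must carefully track the boundary of each prism $G_{i*}(c_i \times I)$, check that the constant top faces contribute nothing in degree $2$, and confirm that the two remaining side-tracks assemble to exactly $\Phi_*$ of the suspended cycle with the correct signs. Everything after that is formal — the suspension-of-a-graph computation and the passage through asphericity are routine once the identity is in hand. The conceptual content is precisely that the covering hypothesis forces $\iota_* c$ to be \emph{spherical} in $X$, so that vanishing of $u$ on all of $H_2(Y;\RR)$ reduces to its vanishing on the Hurewicz image of $\pi_2$, which is the definition of aspherical.
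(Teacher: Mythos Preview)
The paper does not supply its own proof of this statement; it is quoted verbatim from Rudyak--Oprea \cite[Theorem 4.1]{RO99} and used as a black box in the proof of Theorem~\ref{thm:main_general_LS}. So there is nothing in the paper to compare your argument against directly.

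Your proposal is correct. The one point you flag---the identity $\iota_*[c]=\pm\Phi_*\Sigma[b]$---does go through: the prism $\sum_i G_{i*}(c_i\times I)$ exhibits $\iota_*c$ as homologous to $G_{1*}(b\times I)-G_{2*}(b\times I)$ plus a $2$-chain supported on the two constant points; the cycle representing $\Phi_*\Sigma[b]$ differs from the same expression by another point-supported $2$-chain; hence the two cycles differ by a $2$-cycle supported on finitely many points, which is null-homologous. After that, your reduction to spherical classes via loops in $A$ is routine, and the connectedness hypothesis on $A$ is unnecessary since $H_1(A;\RR)=H_1(A;\ZZ)\otimes\RR$ is always spanned by classes of loops (one per component).

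For context, the argument in \cite{RO99} is more structural and less hands-on: one first uses what the present paper records as Proposition~\ref{prop:aspherical_iff} to write $u=f^*v$ for some $v\in H^2(K(\pi_1(X),1);\RR)$, and then appeals to Rudyak's theorem that every degree-$k$ class on an Eilenberg--MacLane space $K(\pi,1)$ has \emph{strict} category weight at least $k$, combined with the naturality of strict category weight under pullback. Your approach is the degree-$2$ unwinding of that statement, carried out by an explicit Mayer--Vietoris/prism computation; it is self-contained and avoids the Ganea/strict-weight machinery, at the cost of not generalizing as cleanly to higher degrees.
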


In view of Theorem \ref{thm:aspherical_cwgt},
we can generalize Theorem \ref{thm:aspherical_LS} as follows:

\begin{theorem}\label{thm:main_general_LS}
    Let $(M,\omega)$ be a $2n$-dimensional spherically monotone or spherically negative monotone symplectic manifold with monotonicity constant $\lambda$.
    If $([\omega]-\lambda c_1)^n$ is a non-zero cohomology class, then $\cat(M)=\dim{M}+1$.
\end{theorem}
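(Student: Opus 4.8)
The plan is to pin down $\cat(M)$ by matching an upper bound with a lower bound coming from category weight, in close analogy with the $\TC$ arguments for Theorems \ref{thm:main_general_toroidally} and \ref{thm:main_general_spherically}. The upper bound is immediate: $M$ is a connected (hence $0$-connected) closed $2n$-manifold, so \eqref{eq:upper_bound_LS} gives $\cat(M)\le \dim M+1=2n+1$.

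For the lower bound, I would first observe that the spherical monotonicity (or spherical negative monotonicity) hypothesis
\[
    [\omega]|_{h_S(\pi_2(M))}=\lambda c_1|_{h_S(\pi_2(M))}
\]
says precisely that the cohomology class $u:=[\omega]-\lambda c_1\in H^2(M;\RR)$ vanishes on $h_S(\pi_2(M))$, i.e.\ $u$ is aspherical in the sense of Definition \ref{def:atoroidal_aspherical}~(i). The point to keep in mind here is that only the \emph{cohomology class} needs to be aspherical; a $2$-form representing $u$ need not be non-degenerate, so one cannot simply invoke Theorem \ref{thm:aspherical_LS}. Since by assumption $u^n=([\omega]-\lambda c_1)^n\ne 0$, in particular $u\ne 0$, and Theorem \ref{thm:aspherical_cwgt} applies to give $\cwgt(u)\ge 2$.

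Next I would feed this into the subadditivity of category weight under cup products (Theorem \ref{thm:cateogry_weight_cup_product}), applied $n-1$ times (vacuously when $n=1$), to obtain
\[
    \cwgt(u^n)\ \ge\ n\cdot\cwgt(u)\ \ge\ 2n.
\]
Because $u^n\ne 0$ by hypothesis, the defining property of $\cat$ via category weight recalled in Section \ref{sec:category_weight} yields $\cat(M)\ge 2n+1$. Combining with the upper bound gives $\cat(M)=2n+1=\dim M+1$, as claimed; Theorem \ref{thm:main_LS} then follows by checking, exactly as in the proof of Theorem \ref{thm:main}, that for a closed symplectic $4$-manifold with $\kappa(M,\omega)\ne-\infty$ one has $([\omega]-\lambda c_1)^2>0$.

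I do not expect a serious obstacle: the argument is essentially a one-line consequence of Theorems \ref{thm:aspherical_cwgt} and \ref{thm:cateogry_weight_cup_product} once the monotonicity condition is rephrased as asphericity of $u$. The only place demanding a little care is the non-vanishing hypothesis $u^n\ne 0$, which is exactly what prevents the cup power from collapsing and keeps the category-weight estimate non-vacuous; this is also where, in the intended applications, the Kodaira-dimension assumption enters through the positivity computation $\int_M\omega\wedge\omega+2\lambda K\cdot[\omega]+\lambda^2 K\cdot K>0$.
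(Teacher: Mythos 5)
Your proposal is correct and follows essentially the same route as the paper: rephrase spherical (negative) monotonicity as asphericity of $[\omega]-\lambda c_1$, apply Theorem \ref{thm:aspherical_cwgt} to get $\cwgt\geq 2$, use Theorem \ref{thm:cateogry_weight_cup_product} to get $\cwgt\bigl(([\omega]-\lambda c_1)^n\bigr)\geq 2n$, and combine the resulting lower bound $\cat(M)\geq 2n+1$ with the upper bound \eqref{eq:upper_bound_LS}. No discrepancies worth noting.
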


\begin{proof}
If $(M,\omega)$ is spherically monotone or spherically negative monotone with monotonicity constant $\lambda$,
then there exists $\lambda\in\RR$ such that
\[
    [\omega]|_{h_S(\pi_2(M))} = \lambda c_1|_{h_S(\pi_2(M))}.
\]
That is,
\[
    ([\omega] - \lambda c_1)|_{h_S(\pi_2(M))} = 0.
\]
It implies that the cohomology class $[\omega] - \lambda c_1$ is aspherical.
Let $K=K(\pi_1(M),1)$ denote the Eilenberg--MacLane space of the fundamental group of $M$.
Let $f\colon M\to K$ be a continuous map which induces an isomorphism of fundamental groups.
Applying Theorem \ref{thm:aspherical_cwgt} for $[\omega] - \lambda c_1$ yields that
$\cwgt([\omega] - \lambda c_1)\geq 2$.
By Theorem \ref{thm:cateogry_weight_cup_product},
\[
    \cwgt([\omega] - \lambda c_1)^n\geq n\cdot\cwgt([\omega] - \lambda c_1)\geq 2n.
\]
Since $([\omega] - \lambda c_1)^n\neq 0$ by the assumption, we obtain $\cat(M)\geq 2n+1$.
By \eqref{eq:upper_bound_LS}, we conclude that $\cat(M) = 2n+1$.
\end{proof}

Now the proof of Theorem \ref{thm:main_LS} is almost same as in Theorem \ref{thm:main}.

\begin{proof}[Proof of Theorem \ref{thm:main_LS}]
Let $(M,\omega)$ be a spherically monotone symplectic 4-manifold with monotonicity constant $\lambda$.
If we assume that $\kappa(M,\omega)\neq -\infty$, then $([\omega]-\lambda c_1)^2$ is non-zero.
Now Theorem \ref{thm:main_general_LS} completes the proof.
\end{proof}


\section*{Acknowledgments}

The author would like to thank the anonymous referee.


\bibliographystyle{amsalpha}
\bibliography{orita_bibtex}
\end{document}